\documentclass[12pt]{article}

\usepackage{amssymb,amsmath,amsfonts,amssymb}
\usepackage{graphics,graphicx,color,hhline}
\usepackage{bbm}
\usepackage{stmaryrd}
 \usepackage{mathtools}

\usepackage{authblk}
\usepackage{euscript}

\def\@abssec#1{\vspace{.05in}\footnotesize \parindent .2in 
{\bf #1. }\ignorespaces} 

\graphicspath{{/EPSF/}{Figures/}}   

\newtheorem{theorem}{Theorem}[section]
\newtheorem{lemma}[theorem]{Lemma}

\def \Rm {\mathbb R}

\def \Cm {\mathbb C}

\def\un{{\mathbbmss{1}}} 

\newcommand{\eps}{\varepsilon}
\newcommand{\E}{\mathbb E}

\newcommand{\be}{\begin{equation}}
\newcommand{\ee}{\end{equation}}
\newcommand{\bea}{\begin{eqnarray}}
\newcommand{\eea}{\end{eqnarray}}
\newcommand{\bee}{\begin{eqnarray*}}
\newcommand{\eee}{\end{eqnarray*}}

\newcommand{\bu}{\mathbf u} \newcommand{\bv}{\mathbf v}

\newcommand{\bx}{\mathbf x} \newcommand{\by}{\mathbf y}

\def\fref#1{{\rm (\ref{#1})}}

\newcommand{\calH}{\mathcal H}

\newcommand{\calO}{\mathcal O}
\newcommand{\calK}{\mathcal K}

\newcommand{\calR}{\mathcal R}

\newcommand{\calG}{\mathcal G}
\newcommand{\calJ}{\mathcal J}

\newcommand{\cout}[1]{}

\usepackage[normalem]{ulem}
\usepackage{soul}
\usepackage{xcolor}

\begin{document}
\title{Blind source separation for imaging}
\author[1,2]{Randy Bartels\footnote{rbartels@morgridge.org}}
 \author[3]{Olivier Pinaud\footnote{olivier.pinaud@.colostate.edu} }
  \affil[1]{Morgridge Institute for Research, Madison, WI 53715 USA}
 \affil[2]{Biomedical Engineering Department, University of Wisconsin--Madison, Madison, WI 53715 USA}
 \affil[3]{Department of Mathematics, Colorado State University\\ Fort Collins CO, 80523}
\maketitle
\begin{abstract}
  This work is concerned with the problem of blind source separation and its applications to imaging. We first establish a theoretical result that we stated in our previous article \cite{BPV25} on imaging in diffusive environments. This result is a generalization of separability criteria found in the literature to arbitrary correlated complex-valued sources with additive noise. In a second step, we verify these separability conditions in two propagation regimes frequently encountered in imaging: the speckle regime and the random geometrical optics regime. Finally, we propose a new imaging method based on the blind source separation problem that improves on images obtained with the classical decomposition of the time reversal operator method.  
   \end{abstract}

   \section{Introduction}
   This article is the companion paper of our work \cite{BPV25} where we proposed a new method for imaging biological samples in a highly diffusive regime. A key ingredient in the technique is the resolution of a Blind Source Separation problem (BSS) allowing us to extract, from measurements of the total field, individual Green's functions associated with point scatterers. Once estimated, these Green's functions are used to create an image by solving a deconvolution problem. Specifically, in \cite{BPV25} we solved the BSS problem using classical methods of Independent Component Analysis (ICA), in particular the RobustICA algorithm and associated code \cite{zarzoso2009robust}. The standard BSS setting considers a set of \textit{independent} sources, and the theoretical analyses of ICA methods found in the literature strongly rely on the assumption of independence. The latter is not verified in our imaging problem because the sources correspond to scatterers that yield correlated fields when they are in close proximity of each other. Moreover, when these sources are complex-valued, the classical theoretical setting assumes that the sources are \textit{circular}, namely random variables of the form $X+iY$ where $\E\{X\}=\E\{Y\}=\E\{XY\}=0$, $\E\{X^2\}=\E\{Y^2\}$, which may or may not be satisfied in practice. In particular, the circularity condition holds only approximately in our imaging problem in diffusive environments.


   Our main objectives in this work are then as follows. 
   
   First, we extend the theoretical analysis of the classical ICA functional (here the kurtosis) to \textit{correlated non-circular sources}. To this end we derive explicit separability conditions on the sources under which, together with smallness assumptions on a possible additive noise, the solutions to the BSS problem are approximately among the local extrema of the kurtosis under appropriate constraints. We also obtain an error estimate. This is a theoretical \textit{local} result that generalizes that of \cite{fastICA}, and since the optimization problem is not convex, there is in principle no guarantee that the numerical algorithms find our solution of interest without a good initial guess. Although when the sources are independent and the data are real-valued, \cite{douglasICA} shows that for the FastICA method the algorithm actually converges to solutions corresponding to separated sources, in the limit of infinite sampling (namely, the expectations can be computed exactly and are not approximated by empirical averages). (FastICA is similar to RobustICA, but with a different line search.) These conditions are not verified in our complex-valued, correlated, non-circular setting with finite-size sampling. Further, it is shown in \cite{zarzoso2009robust} that when the sample size is too small, solutions may be trapped around extrema not associated with separated solutions. Nevertheless, the results of \cite{zarzoso2009robust} hint at the fact that the RobustICA algorithm may behave well in our imaging setting. Indeed, this is what we have observed in \cite{BPV25}, where the algorithm always converged to our solution of interest. However, we observe in the present work, in a different imaging regime, situations where the algorithm does not converge to the desired solutions.


   A second goal is to verify these separability conditions for two different imaging regimes: the speckle regime of \cite{BPV25} where the sources are fully random fields, and the Random Geometrical Optics (RGO) regime where the sources are plane waves perturbed by random phases. The nature of the BSS problem is very different in these two regimes: the speckle regime is similar to the standard BSS formulation where the kurtosis is estimated with empirical averages over independent realizations of the illuminating fields, while in the RGO regime the kurtosis is computed by integration of the fields over the domain of measurement.



   Our third goal is to show an example of application of BSS to imaging. We considered in \cite{BPV25} a speckle Second Harmonic Generation (SHG) regime, where both the illuminating and the measured fields are fully random. We investigate in the present work a scenario where the illuminating fields are fully random while the measured fields are in the RGO regime. 

   
We propose a two-step method to construct images. As in \cite{BPV25}, the first step is to separate the fields emanating from the different scatterers. The second is to exploit these individual fields to remove the local random phases to obtain diffraction-limited images. Our method is an improved version of the classical decomposition of the time reversal operator (DORT) method \cite{DORT2}, which is generally only effective in weakly heterogeneous media. The source separation is the key ingredient of the method.


   We consider both SHG and linear scattering imaging scenarios. It was not possible to treat linear scattering in \cite{BPV25} since standard ICA methods exclude multiple Gaussian sources, and the incoming and measured fields are both Gaussian in a speckle regime. The situation is different in the RGO regime. Under appropriate conditions, we show that it is possible to separate the Green's functions, allowing us (i) to decrease the number of illuminations needed in the SHG case, and (ii) to treat linear scattering.


   The article is structured as follows. We introduce in Section \ref{BSS} generalities on blind source separation and present our optimality result. Section \ref{sec:appli} is devoted to applications to imaging. We first define the measurement setting (a typical microscopy experiment) in Section \ref{ssec:mes}, then particularize the BSS problem to imaging in Section \ref{ssec:bss}. We verify the separability criterion obtained in our optimality result in both the speckle and RGO regimes. 
   We introduce our imaging procedure in Section \ref{secIm}, and present numerical simulations in Section \ref{sec:simu}, both for SHG and linear scattering. Finally, a proof of our main theoretical result is given in Appendix \ref{app:proof}.

   
   
 \paragraph{Acknowledgment.} This work was supported by NSF grant DMS-2404785 and grants 2023-336437 and 2024-337798 from the Chan Zuckerberg Initiative DAF, an advised fund of Silicon Valley Community Foundation.

   \section{Blind source separation} \label{BSS}

   In this section, we introduce the basic problem of BSS, and define first some notations that will be used throughout the article.
 
    \paragraph{Notation.} The canonical basis of $\Rm^N$ is denoted by $\{e_p\}_{p=1,\cdots,N}$. $\Re z$ and $\Im z$ are the real and imaginary parts of $z \in \Cm$, and both $\bar z$ and $z^*$ will denote the complex conjugate of $z$. We write $x^*$ for the conjugate transpose of a vector $x \in \Cm^N$, and depending on which one is more convenient, we will use two notations for the $\Cm^N$ inner product, $(x,y)=x^*y$ for $x,y \in \Cm^N$. Since there will be no possible confusion, we will use the notation $\| \cdot \|$ for both vector and matrix norms, and we set $ \|x\|=(x,x)$ when $x \in \Cm^N$. In the error estimates, we will not keep track of the dependency of some constants on the matrices dimensions, and as a consequence any norm can be used in the proof of  Theorem \ref{th:err}. For $A \in \Cm^{N \times N}$, we set for instance $\| A \|=\max_{i,j=1,\cdots, N} |a_{ij}|$. For two matrices $A_0$ and $A_1$, we will write $A_0=\calO(A_1)$ whenever there is a constant $C>0$, independent of $A_0$ and $A_1$, such that $\|A_0\| \le C\|A_1\|$. We will adopt the same notation when $A_0$ is a scalar or a vector.
   
          \subsection{Generalities} \label{secgene}
          We describe here the general BSS problem and particularize it to our imaging setting in Section \ref{ssec:bss}. The starting point is the linear system
          $$x=As+n,$$
          where $A$ is a (deterministic) complex-valued $N \times N $ invertible matrix, $s \equiv (s_i(\omega))_{i=1,\cdots,N}$ is a complex-valued random (column) vector of length $N$ that contains the sources, and $n$ is a vector mean-zero random noise, independent of $s$ for simplicity. The variable $\omega$ models the sampling process, and we have in practice $\omega=(\omega_i)_{i=1,\cdots,N_{sa}}$, that is we have access to $N_{sa}$ samples of the vector $x$, independent or not. For computations, the data $x$, source $s$ and noise $n$ are in general identified with $N \times N_{sa}$ complex-valued matrices.  The matrix $A$, as well as $s$ and $n$, are all unknown, and the goal is to extract $s$ from knowledge of $x$ only. This is done using ICA.

          There are natural degeneracies in the problem: the data $x$ is invariant after arbitrary permutations of the columns of $A$  and the entries of $s$, as well as after rescaling these columns and the components of $s$ appropriately. Writing
          $$
          \E \{x\}:=\frac{1}{N_{sa}}\sum_{j=1}^{N_{sa}} x(\omega_j)
          $$
for the empirical average of $x$, we suppose without lack of generality that $\E\{s\}=\E\{n\}=0$ ($0$ is meant here as a $N$ column vector) and that $\E\{|s_i|^2\}=1$ for $i=1,\cdots,N$. The best that can be achieved with such hypotheses is hence an estimate of $s$ up to permutations and rescaling of each entry by a complex number with unit modulus. 
          
          The main idea behind standard ICA is to maximize non-Gaussianity: suppose for the moment that the components $s_i$ are independent with same distributions; according to the central limit theorem, a linear combination of the $s_i$ will be closer to a Gaussian distribution than the $s_i$ individually. To separate the $s_i$, one then optimizes some criterion that characterizes Gaussianity. One such criterion used in the FastICA \cite{fastICA} and RobustICA \cite{zarzoso2009robust} algorithms is based on the kurtosis, and here we use the definition given in RobustICA: let $w \in \Cm^N$ be a column vector with unit norm. Given $x=As+n$ and writing $y=w^* x \in  \Cm$, the kurtosis considered in RobustICA has the form
$$
K(w)=\frac{\E \{|y|^4\}- 2 (\E\{|y|^2\})^2-|\E\{y^2\}|^2}{(\E\{|y|^2\})^2}.
$$
When $y$ is a complex Gaussian random variable, its kurtosis is zero in the case where $\E$ denotes a true expectation and not an empirical average.

The expression of the kurtosis is simplified when $x$ is pre-whitened, that is when $\E \{xx^*\}=I$, for $I$ the $N \times N$ identity matrix. The pre-whitening is easily accomplished with a Singular Value Decomposition (SVD), and we will assume from now on that the data is pre-whitened. With the normalization condition $w^* w=1$, we find
$$\E\{ yy^*\}=\E\{ w^* x x^* w\}=w^* w=1,$$
and $K$ becomes
$$
K(w)=\E \{|y|^4\}-2-|\E\{y^2\}|^2.
$$

ICA methods consist in finding the critical points of $K$ under appropriate constraints. By critical points, we mean minimizers and maximizers of $K$ under constraints. The sign of the kurtosis of a given source determines whether it is a minimizer (when $K<0$) or a maximizer (when $K>0$). These critical points give in turn estimates of the columns of $(A^{-1})^*$, and after finding $N$ critical points we expect to obtain an estimate of $(A^{-1})^*$, up to the degeneracies. From a theoretical standpoint, it is shown in \cite{fastICA} that when the $s_i$ are mutually independent, when at most one of the $s_i$ is Gaussian, when $s_i$ is circular, and finally when $\E\{\cdot \}$ is the true expectation and not the empirical average as we have here, that the columns of $(A^{-1})^*$ are among the critical points of $K(w)$ under appropriate constraints (that are discussed further). More can be said when the problem is real-valued, in which case it is shown in \cite{douglasICA} that the FastICA algorithm \cite{fastICA} yields $(A^{-1})^*$ (as always, up to permutations and rescaling of the columns).

In our case, the $s_i$ for different $i$ are in general not independent since the fields associated with nearby scatterers are correlated. As stated in the Introduction, one of our goals in this work is to extend the above optimality result to the case of dependent non-circular sources. Before stating our main theoretical result, we need to be more precise about the optimization problem. 

ICA algorithms proceed by iterations: at the first step, $K(w)$ is optimized under the constraint $\|w\|=1$, which is expected to yield an estimate of one column of $(A^{-1})^*$. At the following steps, the kurtosis is optimized under conditions that prevent the algorithm from converging to a previously obtained critical point. Natural constraints would be to impose orthogonality between the critical points, that is that if $w_1 \in \Cm^N$ is the critical point obtained at the first step, the constraints
$$
\|w\|=1, \qquad w^*w_1=0,
$$
are imposed at the second step. This is the standard approach of \cite{fastICA}, but is problematic when the sources are correlated since the estimated matrix $A$ will be unitary by construction while the true $A$ might not be. Consider indeed the noise free case where $n=0$.  The whitening condition yields
$$
I=A C^{(s)} A^*, \qquad \textrm{where} \quad C^{(s)}=\E \{s s^*\},
$$
and as a consequence $A$ is unitary only when the correlation matrix $C^{(s)}$ is the identity matrix $I$, that is when the sources are decorrelated. 

As mentioned in \cite{zarzoso2009robust}, deflation techniques provide an alternative and proceed as follows:  denote by $(a_i)_{i=1\cdots,N}$ the columns of $A$ and assume without lack of generality that the first column of $A^{-1}$ was estimated at the first step. This provides us in turn with an estimate $\tilde{s}_1$ of the source $s_1$ up to a multiplicative factor. Deflation is done by considering
$$
h_1=\underset{h \in \Rm^N}{\textrm{argmin}}\; \E \{ \| x-h \tilde{s}_1 \|^2\}=\frac{\E\{x (\tilde{s}_1)^*\}}{\E\{|\tilde s_1|^2\}}.
$$
The data $x$ is subsequently modified as
\bee
x \to x^{(2)}&=&x-h_1 \tilde s_1\\
&=&\sum_{j=2}^N a_js_j+n-\frac{\tilde s_1}{\E\{|\tilde s_1|^2\}} \left(\sum_{j=1}^N a_j \E\{s_j (\tilde s_1)^*\}+\E\{n (\tilde s_1)^*\}\right)+a_1 s_1.
\eee
The kurtosis $K(w)$ is then optimized under the constraint $w^*w=1$ with the new data $y=w^* x^{(2)}$. At step $\ell \geq 2$, the data has the form
$$
x^{(\ell)}=\sum_{j=\ell}^N a_js_j+n+n^{(\ell)},
$$
where
$$
n^{(\ell)}=\sum_{i=1}^{\ell-1} a_i s_i-\sum_{i=1}^{\ell-1} \frac{\tilde s_i}{\E\{|\tilde s_i|^2\}} \left(\sum_{j=1}^N a_j \E\{s_j (\tilde s_{i})^*\}+\E\{n(\tilde s_i)^*\}\right).
$$
Hence the optimization problem at each step has the same form, what changes are the expressions of the linear systems and of the noise. We will then only pursue the analysis for the first step since it carries over immediately to an arbitrary iteration $\ell$ provided the data $x$ is replaced by $x^{(\ell)}$ and the noise by  $n+n^{(\ell)}$.

\subsection{Optimality result} \label{opti} Consider without lack of generality the first column of $(A^{-1})^*$, that we denote by $c_1 \in \Cm^N$. Let ${w}_0=c_1/\|c_1\|$. With such a defined $w_0$, we have $w_0^* A= \gamma e_1^*$, where $\gamma=\|c_1\|^{-1}$ ($e_1$ is the first vector of the canonical basis of $\Cm^N$). This can be recast as $A^*w_0=\gamma e_1$. Our goal is to show that $w_0$ is approximately a critical point of $K$ with unit norm constraint. We will use the angular distance on the complex unit sphere defined by
$$
\textrm{dist}(x,y)=\arccos (\Re (x,y)), \qquad x,y \in \Cm^N.
$$
The optimization problem is invariant upon multiplication by a global phase factor since $K(w)=K(w e^{i \theta})$ and the unit sphere remains unchanged. The critical points of $K$ will then be at best estimates of the columns of $(A^{-1})^*$ up to global phase factors. This is not an issue for the imaging problem, but is too vague for deriving an error estimate in terms of the distance on the unit sphere of $\Cm^N$.

Since obviously $(w_0,w_0)=1$, it is natural to consider optimizers satisfying the constraint $\Re (w,w_0) \geq 0$ if our goal is to obtain an approximation of $w_0$. Let then $\widetilde{w}_e$ be a global optimizer of $K(w)$ (a minimizer if the kurtosis of $s_1$ is negative, a maximizer if it is positive) on the unit sphere with constraint $\Re (w,w_0) \geq 0$. This latter condition simply means that we are looking for critical points of $K$ on the ``northern hemisphere'' of the unit sphere with north pole $w_0$. In order to compare such an optimizer with $w_0$, we need to choose the optimizer $w_e$ that yields the equality $(w_e,w_0)=1$ when there is no noise and the sources are independent. This is achieved by defining $w_e=\widetilde{w}_e \exp(i \arg(\widetilde{w}_e,w_0))$, so that
$$
(w_e,w_0)= |(\widetilde{w}_e,w_0)|.
$$

We will then derive an error estimate between $w_e$ and $w_0$, showing that $w_e$ is approximately a local minimizer of $K$ on the unit sphere. For this, the correlation matrix of the noise $n$ is denoted by
$$
C^{(n)}=\E \{n n^*\}.
$$
We next introduce the following quantities:
\begin{align} 
  &M^{(s)}=\max_{j\neq 1} \Big|\E\{|s_j|^2 |s_1|^2\}-\E\{|s_j|^2\} \E\{|s_1|^2\}\Big|+\Big|\E\{s_j^2 (s_1^*)^2\}-\E\{s_j^2\} \E\{(s_1^*)^2\}\Big|\label{not1} \\ \nonumber
  &\hspace{1.3cm} +\max_{\tiny{\begin{array}{l}i,j \neq 1\\ i \neq j \end{array}}} \Big|\E\{s_js_i (s_1^*)^2\}-\E\{s_js_i\} \E\{(s_1^*)^2\}\Big|+\Big|\E\{s_js_i^* |s_1|^2\}-\E\{s_js_1\} \E\{s_i^* s_1^*\}\Big| \\ \nonumber
  &\hspace{1.3cm}
  +\|C^{(s)}-I\|\\[3mm]
  &M^{(n)}=\max_{j} \; \E \{ n_j^2\}+\E \{ n_j^4\}.\label{not2}
\end{align}
Above, we recall that $C^{(s)}$ is the correlation matrix of $s$. The constant $M^{(s)}$ is a measure of independence of the sources, and $M^{(n)}$ of how large the noise is. 

In the next theorem, we estimate the distance between $w_e$ and $w_0$ on the complex unit sphere. 

\begin{theorem} \label{th:err}  Assume that $\E \{|s_1|^4\}-2-|\E\{s_1^2\}|^2\neq 0$. Then, there exists $\eps>0$ such that the estimate
  $$
  \textrm{dist}(w_e,w_0)=\calO\big([M^{(s)}+M^{(n)}]^{1/4}\big)
  $$
  holds under the condition $M^{(s)}+M^{(n)} \leq \eps$.
\end{theorem}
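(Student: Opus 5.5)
The plan is to compare $K$ with an idealized kurtosis $K_0$, computed as if the sources were decorrelated and independent up to fourth order and there were no noise. We then show that $K_0$ has a strict, quadratically nondegenerate extremum at $w_0$ on the hemisphere, and combine a uniform perturbation bound with this quadratic growth.

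\textbf{Step 1: change variables.} Set $v=A^*w$, so that $y=w^*x=v^*s+w^*n$. In the noise-free case the whitening relation $I=AC^{(s)}A^*$ gives
$$\|v\|^2=w^*AA^*w=w^*(C^{(s)})^{-1}w+\calO(\|C^{(s)}-I\|).$$
Hence $A$ is unitary up to $\calO(M^{(s)})$. With noise, whitening gives $AC^{(s)}A^*+C^{(n)}=I$, so $A$ remains close to unitary, with an additional $\calO(M^{(n)})$ error. In particular $v_0=A^*w_0=\gamma e_1$ with $\gamma=1+\calO(M^{(s)}+M^{(n)})$.

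\textbf{Step 2: expand the kurtosis.} Write
$$y=\sum_j \bar v_j s_j+\eta,\qquad \eta=w^*n,$$
and expand $\E\{|y|^4\}$ and $\E\{y^2\}$ as polynomials in $v,\bar v$. The moments that would factor under independence and circularity produce
$$K_0(v)=\kappa_1|v_1|^4+\sum_{j\ge2}\kappa_j|v_j|^4,\qquad \kappa_j=\E\{|s_j|^4\}-2-|\E\{s_j^2\}|^2,$$
plus the normalization $(\sum_j|v_j|^2)^2$. Every cross term is either one of the quantities appearing in $M^{(s)}$ multiplied by a monomial of degree $4$, or involves the noise and is bounded by $M^{(n)}$. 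The cross terms built from pairs $(j,i)$ with $i,j\ne1$ do not vanish and are not small; for those we need only that they are $\calO(|v'|^2)$ with $v'=(v_2,\dots,v_N)$, hence harmless near $e_1$. The terms coupling $v_1$ to $v'$ are exactly those controlled in \eqref{not1}.

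Near $v=e_1$ this yields
$$K(w)=\kappa_1|v_1|^4+\calO(|v'|^2)+\calO(M^{(s)}+M^{(n)}),$$
uniformly on the sphere. More precisely, on the constraint set we have $|v_1|^2=1-|v'|^2+\calO(M)$ with $M=M^{(s)}+M^{(n)}$, so $\kappa_1|v_1|^4=\kappa_1-2\kappa_1|v'|^2+\dots$.

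\textbf{Step 3: the main obstacle, quadratic growth.} Assume $\kappa_1>0$ (the case $\kappa_1<0$ is symmetric, with maximizers replaced by minimizers). We need to show that on the hemisphere
$$K(w_0)-K(w)\ge c\,\textrm{dist}(w,w_0)^4-C M$$
for some $c>0$. The difficulty is that the $\calO(|v'|^2)$ terms, which involve the fourth moments of the other sources, could compete with $-2\kappa_1|v'|^2$. This is where the fourth power, and hence the exponent $1/4$, enters: I would not try to prove a second-order separation. Instead I would use the crude structure $K(w)\le \kappa_1|v_1|^4+B(1-|v_1|^4)+CM$ with $B<\kappa_1$, or otherwise argue locally, in a neighborhood, that
$$K(w_0)-K(w)\ge c\,|v'|^4-CM.$$
If only this quartic growth is available, then from $K(w_e)\ge K(w_0)$ we get $|v'|^4=\calO(M)$, so $|v'|=\calO(M^{1/4})$. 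The phase normalization $(w_e,w_0)=|(\widetilde w_e,w_0)|$ removes the $e^{i\theta}$ degeneracy, so $\textrm{dist}(w_e,w_0)\approx|v'|$. Choosing $\eps$ small enough keeps the optimizer inside the local neighborhood where the expansion is valid. This gives the estimate $\textrm{dist}(w_e,w_0)=\calO(M^{1/4})$ claimed in Theorem~\ref{th:err}.
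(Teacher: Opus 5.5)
Your Step~3 is where the estimate has to be produced, and neither of your two routes is carried out. The bound $K(w)\le\kappa_1|v_1|^4+B(1-|v_1|^4)+CM$ with $B<\kappa_1$ is an extra hypothesis, essentially that $s_1$ has the dominant kurtosis. It does not follow from $\kappa_1\neq0$. Even granted, it would give growth of order $1-|v_1|^4\approx 2|v'|^2$, not quartic growth. The local bound $K(w_0)-K(w)\ge c|v'|^4-CM$ cannot come out of your own Step~2 either. If the expansion really contained an unsigned $\calO(|v'|^2)$ term with an $\calO(1)$ coefficient, that term would dominate $c|v'|^4$ as $v'\to0$, and $w_0$ need not even be a local extremum. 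Quartic growth presupposes exactly the sign information on the quadratic part that you chose not to compute, so the exponent $1/4$ is not derived.

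Your closing claim that a small $\eps$ keeps the optimizer in the local neighborhood is also unsupported. Take independent, noise-free sources with exact expectations, so $M^{(s)}=M^{(n)}=0$, and $\kappa_2>\kappa_1>0$. Then $A$ is unitary, the second column $c_2$ of $(A^{-1})^*$ satisfies $\Re(c_2,w_0)=0$, and $K(c_2)=\kappa_2>K(w_0)$. The paper's argument is implicitly local at the same point: every $x\ge 1/a$ satisfies the inequality $x^2(1-ax)\le b$ it invokes. So this should be stated as an explicit a priori assumption that $w_e$ is near $w_0$.

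The missing idea is the correct degree count in Step~2; once it is done, the obstacle you anticipate disappears. Expand $\E\{|y|^4\}-2(\E\{|y|^2\})^2-|\E\{y^2\}|^2$ with $y=\sum_j\bar v_j s_j+w^*n$. Every monomial of degree $2$ in $v'$ carries exactly two source indices equal to $1$. Moments of the other sources alone ($\kappa_j$, $\E\{|s_i|^2|s_j|^2\}$, and so on) multiply monomials of degree $4$ in $v'$, so they are $\calO(|v'|^4)$, not $\calO(|v'|^2)$. In the degree-$2$ coefficients the factorized parts cancel exactly. The term $4|v_1|^2|v_j|^2\E\{|s_1|^2\}\E\{|s_j|^2\}$ cancels against the normalization, and $\E\{s_1^2\}\E\{\bar s_i\bar s_j\}$ cancels against $|\E\{y^2\}|^2$. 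What remains are deviation-from-factorization quantities of the type collected in \fref{not1}. Hence $K(w_0)-K(w)=\big(2\kappa_1+\calO(M^{(s)}+M^{(n)})\big)|v'|^2+\calO(|v'|^3)+\dots$, which is quadratic growth.

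This is exactly the content of the paper's Lemma~\ref{opt2}. There it is obtained by Taylor expanding $\calJ(t)=K(w(t))$ along the normalized segment from $w_0$ to $w_e$, writing $w_e-w_0=uw_0+w_\perp$ with $([C^{(s)}]^{-1}w_0,w_\perp)=0$, and combining with Lemma~\ref{opt1} and the cubic remainder \fref{estimJ}. The result is $\|w_\perp\|=\calO([M^{(s)}+M^{(n)}]^{1/2})$. The exponent $1/4$ in the paper enters only at the end. Lemma~\ref{lemU} controls $u$ merely as $\calO(\|w_\perp\|)+\calO(C^{(s)}-I)$, and $u$ is then converted into an angle via $\arccos(1-x)\le 2\sqrt{x}$.

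Your exact quartic expansion in $v=A^*w$ is a reasonable substitute for the paper's Taylor expansion. Combined with your remark $\textrm{dist}(w_e,w_0)\approx|v'|$, it would even give the stronger rate $1/2$, modulo one further correction. The terms linear in $v'$ have coefficients $\E\{s_js_1^*|s_1|^2\}-\E\{s_js_1\}\E\{(s_1^*)^2\}$, up to $C^{(s)}-I$ and noise. These are not among the quantities in \fref{not1}, contrary to your Step~2. The paper's Lemma~\ref{opt1} keeps them explicitly, and under quadratic growth they can be absorbed by Young's inequality.
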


The condition $\E \{|s_1|^4\}-2-|\E\{s_1^2\}|^2\neq 0$ is equivalent to the empirical kurtosis of the source $s_1$ not being zero, which holds  when $s_1$ is not Gaussian. Under such a condition, the theorem shows, provided that the sources are sufficiently independent and the noise is sufficiently small, that the critical point $w_e$ is an approximation of $w_0$. The theorem provides us with an explicit criterion that can be tested in various configurations. In Sections \ref{ssec:speckle} and \ref{ssec:sepRGO}, we estimate the constant $M^{(s)}$ in the speckle and RGO regimes, respectively. Note, as expected, that $M^{(s)}=0$ in the infinite sampling limit when the sources are independent. 

The theorem gives us theoretical separability conditions but does not say anything about the convergence of a numerical algorithm towards $w_e$. As already mentioned, it is though expected, provided $M^{(s)}+M^{(n)}$ is sufficiently small, that gradient-type algorithms such as FastICA or RobustICA may be able to find $w_e$ without a good initial guess. 


\section{Applications to imaging} \label{sec:appli}
This section is devoted to the applications of BSS to imaging. We first introduce how measurements are performed.
  \subsection{Measurement setting} \label{ssec:mes}
  We suppose we are in the same standard holography setting described in detail in \cite{BPV25}: a sample is illuminated by a laser going through a microscope with focal length $z_s$, and the scattered field is measured at a camera following an interferometric procedure. The sample is placed at one conjugate plane of the microscope while at the other rests a Spatial Light Modulator (SLM) allowing one to control the phase of the field on this plane; see Figure \ref{fig1}. Without scattering media between the microscope and the sample, the field at one conjugate plane is, according to standard Fourier optics, the 2D Fourier transform of the field at the other conjugate plane; see e.g. \cite{mertz2019introduction}, Chapter 3. 
\begin{figure}[h!]
\begin{center} 
  \includegraphics[scale=1.2]{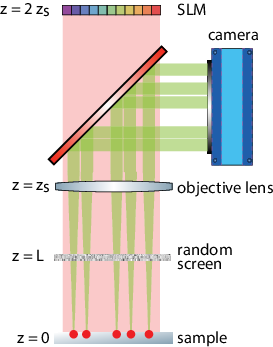}
  \end{center}
  \caption{Measurement setting} 
  \label{fig1}
\end{figure}



Scattering is often modeled in experiments by random phase screens consisting of ground glass diffusers. We place one random screen at $z=L$, and various regimes can be obtained depending on the fluctuations of the  phase. Here, the fluctuations are set such that the outgoing field is in the RGO regime (details below). A second screen is added in \cite{BPV25} in the speckle regime. 

  The measured data are stored into the so-called reflection matrix $\calR$ (see e.g. \cite{GiganTR}), which has the following form in the present measurement setting
    $$
     \calR_{ij}=\int_{\Rm^2} \calG(\bu_i;\bx)    V_s(\bx)  \calH^j(\bx) d\bx+n_{ij}.
     $$
     See \cite{BPV25} for more details. Briefly, above $n_{ij}$ is some additive noise, $(\bu_i)_{i=1,\cdots,N_p}$ denotes the collection of pixels on the camera where the field is measured, and the index $j=1,\cdots,N_r$ refers to the label of the illumination; $\calH^j(\bx)$ denotes the $j$ incoming field illuminating the sample placed at $z=0$ along the optical axis, while $\calG(\bu_i;\bx)$ is the outgoing field emitted by a point scatterer at transverse position $\bx$ on the sample plane at $z=0$ and measured at pixel $\bu_i$ in a objective lens pupil plane located $z=2z_s$ (the microscope lens is at $z=z_s$ and $\bu$ could be reimaged to the camera). The quantity $V_s(\bx)$ (the scattering potential) is what we aim to estimate.

     In the SHG case, we have $\calH^j(\bx)=(W^j(\bx))^2$ for an input field $W^{j}$, while $\calH^j(\bx)=W^j(\bx)$ in the linear scattering case. The $N_r$ illuminations are obtained by choosing $N_r$ independent random phases on the SLM, and these phases have statistical parameters such that the $W^j$ are in the \textit{Gaussian speckle regime}. More details are given in Section \ref{sec:in}.

     
      We discretize the integral in $\calR$ using a grid $(\bx_j)_{j=1,\cdots,N_d}$ with $N_d$ points, and the reflection matrix then reads ($^T$ denotes transposition)
   
     $$
     \calR_{ij}=\sum_{\ell=1}^{N_d}\calG(\bu_i;\bx_\ell)    V_s(\bx_\ell)  \calH^j(\bx_\ell) = (\textsf{G} \rho^{(0)} \textsf{H}^T)_{ij}+n_{ij},
     $$
     where $\textsf{G}$ is a $N_p \times N_d$ matrix with components $\textsf{G}_{ij}=\calG(\bu_i;\bx_j) $, $\rho^{(0)}$ is a diagonal $N_d \times N_d$ matrix with diagonal entries $\rho^{(0)}_{jj}= V_s(\bx_j) $, and $\textsf{H}$ is a $N_r \times N_d$ matrix with components $\textsf{H}_{j \ell}=\calH^j(\bx_\ell)$. Equivalently, we can say that $V_s$ consists of $N_d$ points scatterers located at $(\bx_j)_{j=1,\cdots,N_d}$, and obtain a similar expression for $\calR$.

     In the next two sections, we model the incoming and outgoing fields using standard Fourier optics.

     \subsubsection{The incoming field} \label{sec:in}



     We follow \cite{mertz2019introduction}, Chapter 3: neglecting the finite size of the microscope lens and knowing the field $W_{\rm{SLM}}$ on the SLM at $z=2z_s$, the field at $z=L$, right before the random screen, is 
  \bea
W(\bx,L)&=&  e^{ik_0 (2z_s -L)} \int_{\Rm^2}e^{i k_0  L |\bv|^2/2z_s^2} e^{-i k_0  \bx \cdot \bv/z_s} W_{\rm{SLM}}(\bv) P_{\rm{SLM}}(\bv) d\bv, \label{WSLM}
\eea
where $k_0=2 \pi/\lambda$, for $\lambda$ the central wavelength of the laser, and $P_{\rm{SLM}}$ the pupil function, here a circular aperture with radius $R_{\rm{SLM}}$. The above integral will be evaluated in the simulations with a Fast Fourier Transform. The field $W(\bx,L)$ is then multiplied by the random phase $e^{i k_0 S(\bx)} P$, for $P$ the support of the random plate and $S$ some random field. 
The field finally propagates to $z=0$, and we use a simple paraxial (Fresnel) propagator for this, resulting in the expression 
\bea \label{UL}
W(\bx,0) &=& e^{ik_0L} \int_{\Rm^2} e^{i k_0 |\bx-\bv|^2/2L} e^{i k_0 S(\bv)}W(\bv,L) P(\bv)d\bv.
\eea
The field $W_{\rm{SLM}}$ has the form $W_{\rm{SLM}}=e^{i \sigma_{\rm{SLM}} V_{\rm{SLM}}}$, where $V_{\rm{SLM}}$ is a mean-zero random field with unit variance and correlation length $\ell_{\rm{SLM}}$. As in \cite{BPV25}, we choose $R_{\rm{SLM}}$, $\sigma_{\rm{SLM}}$, and $\ell_{\rm{SLM}}$ so that $W(\bx,0)$ is in the \textit{Gaussian speckle regime}, namely $W(\bx,0)$ is approximately a complex circular Gaussian field, i.e. a mean-zero field such that, for all $\bx$ on the sample plane,
$$\E\{(\Re W(\bx,0))^2\} \simeq \E\{(\Im W(\bx,0))^2\}, \qquad \E\{(\Re W(\bx,0))(\Im W(\bx,0))\} \simeq 0.
$$
This is a direct consequence of the central limit theorem, see \cite{BPV25}, Section 5.1, for more details. We give numerical values for $R_{\rm{SLM}}$, $\sigma_{\rm{SLM}}$, and $\ell_{\rm{SLM}}$ in Section \ref{sec:simu}.

The key quantity for imaging is the correlation length $\ell_{c,\rm{in}}$ of $W$, that is the length such that
$$
\frac{\E\{ W(\bx,0) W^*(\by,0)\}}{\E\{ |W(\bx,0)|^2\}} \ll 1 \qquad \textrm{when} \qquad |\bx-\by| \gg \ell_{c,\rm{in}}.
$$
Setting $R_{\rm{SLM}}=\textrm{NA}\; z_s$, for NA the numerical aperture of the microscope, and choosing the phase $S$ appropriately, we show in \cite{BPV25} that $\ell_{c,\rm{in}}=\lambda/2\rm{NA}$, which is the usual resolution limit in microscopy. We will see in Section \ref{ssec:speckle} that scatterers at distances greater than $\ell_{c,\rm{in}}$ can be separated in the SHG setting by BSS techniques.

We move on now to the outgoing field $\calG$.

     \subsubsection{The outgoing field} \label{sec:out}
     We model the propagation from the sample plane at $z=0$ to the plane at $z=L$ using again the Fresnel propagator. The resulting field at $z=L$ is then multiplied by the random phase $e^{i k S(\bx)} P$. In the SHG case, we have $k=2k_0$, while $k=k_0$ in the linear scattering case. The field at the camera, virtually located at $z=2z_s$, is obtained using standard holographic imaging \cite{mertz2019introduction}: neglecting again the finite size of the microscope lens, the field at $z=2z_s$ is the Fourier transform of the one at $z=L$ up to a quadratic phase factor. The mathematical formula for the Green's function at a point $\bu$ on the camera emitted by a point $\bx_j$ on the sample plane is then

        \bea \label{bG}
  \calG(\bu;\bx_j)&=&e^{ik(2z_s-L)} e^{i \frac{k L}{2 z_s^2} |\bu|^2} \int_{\Rm^2} e^{-\frac{i k }{z_s}\bu \cdot \bv} e^{ik S(\bv)}  e^{i \frac{k }{2 L} |\bv-\bx_j|^2}P(\bv) d\bv.
  \eea



Diffraction effects due to the finite size of the microscope lens will be modeled by choosing appropriately the domain on the camera used to backpropagate the data. 



  We now simplify expression \fref{bG} using a stationary phase analysis. Suppose that $P$ is the characteristic function of a disk of radius $R_s$ and that $S$ reads $S(\bx)=\sigma V(\bx/\ell_0)$, for a mean-zero stationary random field $V$ with unit variance. The parameter $\sigma$ quantifies the strength of the fluctuations and $\ell_0$ their correlation length. Under the conditions  
\be \label{contstat}
  \frac{z_s}{k R_s^2}\ll 1, \qquad \frac{\sigma L}{\ell_0 R_s} \ll 1,
  \ee
  the field $\calG$ simplifies to, up to irrelevant multiplicative factors,
   \bea \label{bG1}
  \calG(\bu;\bx_j)&=& e^{-\frac{i k }{z_s}\bu \cdot \bx_j} e^{ik S(\bx_j+\frac{L}{z_s} \bu)} P(\bx_j+L\bu/z_s).
  \eea
  This shows that, up to the localization by $P$ on the phase plate, the Green's function is approximately equal to the multiplication of free Green's function $e^{-\frac{i k }{z_s}\bu \cdot \bx_j}$, which imparts a phase tilt that maps to positions in the object $\bx_j$, and the phase factor $e^{ik S(\bx_j+L/z_s \bu)}$. This is an instance of the RGO regime particularized to our measurement setting. The first relation in \fref{contstat} is a high frequency condition, while the second one assumes that the correlation length is sufficiently large and that the fluctuation strength is sufficient small, which are  both typical of geometrical optics.



  \paragraph{Isoplanaticity.} There are two important subregimes that depend on the correlation structure of the phase screen $e^{i k S}$. First, we have when $V$ is Gaussian (see Section 5.1 in \cite{BPV25}),
 $$
\E\{e^{i k (S(\bv)-S(\bv'))}\}\simeq e^{-|\bv-\bv'|^2/l_{c}^2}
$$
where
\be \label{defLC}
l_{c}=\frac{2\sqrt{2} \ell_0}{k \sigma_0}.
\ee
The quantity $l_c$ defines the correlation length of the phase screen. We have then $S(\bx_i+\frac{L}{f} \bx) \simeq S(\bx_j+\frac{L}{f} \bx)$ when $|\bx_i-\bx_j| \ll l_{c}$. The latter condition allows us to introduce two regimes. In the \textit{isoplanatic regime}, the scatterers are such that their relative distance is less than $l_c$; in such a scenario, the Green's functions associated with different scatterers all have essentially the same random phase. The latter can sometimes be estimated, as in e.g. \cite{murray2023aberration,farah2024synthetic,CLASS}, and therefore removed to recover the free Green's function leading to improved resolution. The regime where some scatterers are separated by a distance larger than $l_c$ is called the \textit{non-isoplanatic} regime, and is much harder to deal with because multiple random phases have to be estimated to obtain good quality images. An attempt towards this goal is proposed in \cite{AubryDO} by introducing the so-called distortion operator. We propose an alternative approach in Section \ref{secIm}, namely exploiting the field separation from the BSS problem. 



     In the next section, we particularize the BSS problem to our imaging setting.

     \subsection{Blind source separation}  \label{ssec:bss}
     
     The main goal is to extract some of the columns of $\textsf{G}$ from the reflection matrix $\calR$. Similar to the DORT method \cite{DORT2}, this is accomplished by first performing an SVD of $\calR$ that yields $\calR \simeq U \Sigma V^*$, for two unitary matrices $U$ and $V$ and a diagonal matrix $\Sigma$. We keep in $\Sigma$ only $N \leq N_d$ significant singular values; 
     this allows one to filter some noise out if data are noisy, and these $N$ singular values can either correspond to $N$ well-separated scatterers or to groups of scatterers (see e.g. Section 5.5 in \cite{BPV25}). Simple calculations explicited in \cite{BPV25} show that $U$ and $V$ are formed by linear combinations of $N$ columns of $\textsf{G}$ and $\textsf{H}$, respectively, and we have
     $$
     U^*=A_0 G^*, \qquad V^*=A_1 H^T,
     $$
     for two appropriate matrices $A_0$ and $A_1$ (in the sequel we will have either $A=A_0$ or $A=A_1$). Above, both matrices $G$ and $H$ contain $N$ columns from $\textsf{G}$ and $\textsf{H}$ (in addition, the $N \times N$ diagonal matrix denoted $\rho$ further incorporates $N$ contributions from $\rho^{(0)}$), and only $U^*$ and $V^*$ are known. We would like to estimate $G$. This is a BSS problem since $A_0$ and $G$ are not known.



     A key observation made in \cite{BPV25} is the fact that even though the incoming fields $\calH^j$ may enjoy some form of orthogonality, the matrices $A_0$ and $A_1$ may not be diagonal. In different terms, the SVD alone is not sufficient to separate orthogonal fields. This is due to the well-known linear algebra fact that eigenvectors associated with degenerate eigenvalues are unstable to perturbations. And such a situation occurs in particular in our imaging setting when the scatterers have comparable intensities, i.e. $\rho_{ii} \simeq \rho_{jj}$. For this reason standard ICA techniques, which are immune to this resonance effect, are crucial.

     How to proceed depends on the structure of the data. In the speckle regime of \cite{BPV25}, the columns of $G$ are complex Gaussian random fields, which prevented us from performing BSS with data $U^*$ since standard ICA algorithms can only reconstruct at most one Gaussian source. We therefore needed to perform a separation based on the incoming fields and considered the setting of SHG holography, where the scatterers have a nonlinear second order response to the incoming wave. In such a case, we recall the field $\calH^j$ reads $\calH^j=(W^j)^2$, where $W^j$ is the $j$ illumination with central frequency $\omega_0$. The outgoing field $\calG$ has then central frequency $2 \omega_0$. With our illumination strategy based on random fields on the SLM, the ingoing fields $W^j$ are complex Gaussians as well, and $\calH^j$ is thus the square of a Gaussian field and is therefore not Gaussian. Hence, it is possible to use ICA with data $V^*$ to extract the matrix $A_1$ (up to permutations and rescaling of the columns). More precisely, we set $x=V^*$, $A=A_1$, $s=H^T$ with the notation of Section \ref{secgene}. Knowing $U$, $\Sigma$ from the SVD, and $A_1$ as a result of the BSS, then allows us to write $G \rho=U \Sigma A_1$ up to the degeneracies, and since $\rho$ is diagonal, to estimate the columns of $G$ independently. To summarize the procedure, we measure $\calR$, perform an SVD to extract the $V$ matrix, then run ICA with data $V$.



     At the theoretical level, it is possible to estimate $M^{(s)}$ in the speckle using Isserlis' theorem; details are provided in Section \ref{ssec:speckle}. 

     We also consider the RGO regime, which we have seen in Section \ref{sec:out} is very different in nature from the speckle regime. Instead of complex Gaussian random fields as in \cite{BPV25}, the outgoing Green's functions $\calG$ in the RGO regime are approximately given by free Green's functions multiplied by random phases. This regime allows us to use the matrix $U^*$ for the BSS. As claimed in the Introduction, this makes it possible in some situations to decrease the number of illuminations required for a good separation in the SHG case, and to treat linear scattering where $\calH^j=W_j$. The BSS problem is then defined with $x=U^*$, $A=A_0$, and $s=G^*$. In such a regime, we can treat pixels on the camera as sampling points, i.e. $\omega_i=\bu_i$ with the notation of Section \ref{secgene}. The matrix $x$ is of size $N \times N_p$ and the expectation $\E$ is proportional to the discretization of the integral of the measurements over the domain of the camera (denoted $D_C$ below). The expectation of the $i$ row of $s$ is hence
     \be \label{esp}
     \E\{s_i\}=\frac{1}{N_p} \sum_{j=1}^{N_p} \bar{\calG}(\bu_j,\bx_i) \sim \int_{D_C} \bar{\calG}(\bu ,\bx_i) d\bu.
     \ee
     
To summarize, we measure $\calR$, perform an SVD to extract the $U$ matrix, then run ICA with data $U$.

 
     We investigate in the next two sections the theoretical separability in the speckle and RGO regimes.
     

     \subsubsection{Separability in the speckle regime}  \label{ssec:speckle}

      We recall that  $\calH^j=(W^j)^2$, so that $s_i$, i.e. the $i$ row of $s$, is such that $s_i(\omega_j)=(W^j(\bx_i))^2$ where $\bx_i$ is the position of scatterer $i$ in the sample plane at $z=0$. Here, the input fields $W^j$ are all independent for different $j$ since they are generated by independent realizations of a random phase on the SLM. As a consequence, the symbol $\E$ in Section \ref{secgene} indeed denotes the empirical average over these realizations, and the $W^j$ are independent realizations of a random field $W$. Again, because the field $W$ is in a speckle regime, it is well-approximated by a complex circular Gaussian random field. 
     In particular, the circularity yields $\E\{(W(\bx))^2\}=\E\{(W(\bx))^4\} \simeq 0$ for any $\bx$ on the sample plane. We assume in the calculations that the number of illuminations is sufficiently large in order to treat the empirical average as the true average.


     The quantity $M^{(s)}$ defined in \fref{not1} involves second-order and fourth-order moments of $\calH^j$, and therefore fourth- and eighth-order moments of $W$. These can be computed by Isserlis' theorem exploiting the Gaussian property. We only treat here one eighth-order term in $M^{(s)}$ since the others can be estimated following the same method. A direct calculation using the circularity of $W$ and Isserlis' theorem shows that
\begin{align*}
\E&\{s_i s_j^* |s_1|^2\} \simeq 2 (\E\{W(\bx_i)W(\bx_j)^*\})^2 (\E\{|W(\bx_1)|^2\})^2\\
&+8\E\{W(\bx_i)W(\bx_1)^*\}\E\{W(\bx_j)^* W(\bx_1)\}\E\{W(\bx_i)W(\bx_j)^*\}\E\{|W(\bx_1)|^2\}\\
&+8\E\{W(\bx_i)W(\bx_1)\}\E\{W(\bx_j)^*W(\bx_1)^*\}\E\{W(\bx_i)W(\bx_j)^*\}\E\{|W(\bx_1)|^2\}.
\end{align*}
The first two terms on the right above are the leading ones since it can be shown that
$$
|\E\{W(\bx)W(\by)\}| \ll |\E\{W(\bx)W(\by)^*\}|.
$$
The dominating terms in $\E\{s_i s_j^* |s_1|^2\}$ are hence controlled by the correlation $\E\{W(\bx)W(\by)^*\}$. The correlation length $\ell_{c,\rm{in}}$ of $W$ was introduced in Section \ref{sec:in}, and we have $\E\{W(\bx)W(\by)^*\} \simeq 0$ when $|\bx-\by| \gg \ell_{c,\rm{in}}$. As a consequence,
$$
\E\{s_i s_j^* |s_1|^2\} \simeq 0 \qquad \textrm{when} \qquad |\bx_i-\bx_j|\gg \ell_{c,\rm{in}}.
$$
The correlation $\ell_{c,\rm{in}}$ depends on the scattering model from the SLM to the sample plane, and we have seen in the scenario that we consider here that $\ell_{c,\rm{in}}=\lambda/2 \rm{NA}$. The other terms in $M^{(s)}$ are treated in the same manner, and the conclusion is that
$$
M^{(s)} \simeq 0 \qquad \textrm{when} \qquad \min_{i,j=1,\cdots,N\; i\neq j}|\bx_i-\bx_j|\gg \ell_{c,\rm{in}}.
$$
If the noise is sufficiently small, Theorem \ref{th:err} implies therefore that the optimizer $w_e$ is a good approximation of $w_0$. Put differently, the fields associated with scatterers separated by at least a distance of order $\ell_{c,\rm{in}}$ can theoretically be separated when the noise is small. In practice, we have observed that the RobustICA algorithm always converges to well-separated solutions provided the number of illuminations $N_r$ is sufficiently large. 


\subsubsection{Separability in the random geometrical optics regime}  \label{ssec:sepRGO}
We have seen previously that setting $s=G^*$ yields, for the $i$ row of $s$, $s_i(\omega_j)=\calG^*(\bu_j,x_i)$, $i=1,\cdots,N$, $j=1,\cdots,N_p$. The Green's function $\calG$ admits expression \fref{bG1} in the RGO regime, and the symbol $\E$ is defined in \fref{esp}. We set $D_C$ to be the square of side $2L_C$ centered at zero. Let $\ell_{c,\rm{out}}=\pi z_c / kL_C$. With $L_C= \textrm{NA}\, z_s$, we have $\ell_{c,\rm{out}}=\lambda/2 \rm{NA}$ in the linear case and $\ell_{c,\rm{out}}=\lambda/4 \rm{NA}$ in the SHG case. The expression of $\ell_{c,\rm{out}}$ is deduced from \fref{expg} in Section \ref{secIm}.

We now plug the expression of $s$ in the definition of $M^{(s)}$ given in \fref{not1}, and exploit the orthogonality of the free Green's function. (We neglect the effects of the random phases, owing to the fact that these are perturbations and make the overall picture worse.) There are five terms to investigate.


The first one on the r.h.s. of  \fref{not1} is equal to zero since $|s_i(\omega_j)|$ is constant according to \fref{bG1}. Regarding the second term, simple calculations (see \fref{expg}) show that terms of the form $\E\{s_j^2\}$ are small provided $|\bx_j| \gg \ell_{c,\rm{out}}$. Similarly, terms of the form $\E\{s_j^2 (s_1^*)^2\}$ are small provided $|\bx_j-\bx_1| \gg \ell_{c,\rm{out}}$. As a conclusion, the second term in the r.h.s. of  \fref{not1} is small provided the scatterers are separated by a distance of at least $\ell_{c,\rm{out}}$ and that $\bx_1$ is away from the origin by at least the same length.

Consider the first contribution of the third term, that is $\E\{s_js_i (s_1^*)^2\}$. A 3-scatterer resonance phenomenon occurs when $|\bx_j+\bx_i-2\bx_1| \ll \ell_{c,\rm{out}}$, that is when $\bx_1$ is approximately $(\bx_i+\bx_j)/2$. The resonance can, in principle, prevent the algorithm from converging to a separated solution. In practice, the picture is actually more complicated: depending on the initial condition, it may happen that scatterer $\bx_i$ or $\bx_j$ is treated in the deflation before scatterer $\bx_1$, and therefore that the resonance cannot occur since one scatterer needed for the resonance is not present any longer. We have observed both scenarios in the simulations for a resonant configuration: convergence to a separated solution for some initial conditions, and convergence to non-separated solutions for other initial conditions.

The second contribution in the third term in $M^{(s)}$ is the product $\E\{s_js_i\} \E\{(s_1^*)^2\}$, $i,j \neq 1$, $i \neq j$. The latter is small when at least $|\bx_i+\bx_j| \gg \ell_{c,\rm{out}}$ or $|\bx_1| \gg \ell_{c,\rm{out}}$ are satisfied.

The fourth and fifth terms are small when
$$
\min_{i \neq j} |\bx_i-\bx_j| \gg \ell_{c,\rm{out}}, \qquad i,j=1,\cdots,N. 
$$
Indeed, it is clear that $\E\{s_js_i^*\}$ and $\|C^{(s)}-I\|$ are small when the latter condition is verified, and so is $\E\{s_js_1\} \E\{s_i^* s_1^*\}$ for $i \neq j$, $i,j\neq 1$: when $\min_{i } |\bx_i+\bx_1| \gg \ell_{c,\rm{out}}$, then $\E\{s_js_1\} \E\{s_i^* s_1^*\}$ is small, and when there is a $j_0$ such that $|\bx_{j_0}+\bx_1| \sim \ell_{c,\rm{out}}$, then $|\bx_i+\bx_1| \geq ||\bx_i-\bx_{j_0}|-|\bx_{j_0}+\bx_1|| \gg  \ell_{c,\rm{out}}$, implying that $\E\{s_i^* s_1^*\}$ is negligible when $i \neq 1$ and $i\neq j_0$.

In summary, when omitting the 3-scatterer resonance condition that we already discussed, the source $s_1$ can theoretically be extracted in a noiseless case provided (i) all scatterers are at least a distance $\ell_{c,\rm{out}}$ apart, and (ii) scatterer $\bx_1$ is at least a distance  $\ell_{c,\rm{out}}$ from the origin. These conditions have to then be adapted to the scatterers that are left at a given step in the deflation. The simulations clearly exhibited failed separation when these conditions were not met.

\subsection{Imaging procedure} \label{secIm}

We first describe the classical DORT method and then propose an improved version taking advantage of the source separation.

  \paragraph{The DORT method.} The DORT method exploits the matrix $U$ obtained from the SVD of the reflection matrix.  As claimed in Section \ref{ssec:bss}, the columns of $U$ (denoted $U_i$) are linear combinations of the columns of $G$ (denoted $G_i$), that is
  $$
U_i=\sum_{j=1}^N \alpha_{ij} G_j, \qquad i=1,\cdots,N,
$$
for some unknown coefficients $\alpha_{ij}$. There are ideal situations where the combination consists only of one Green's function, that is when the $\alpha_{ij}$ are all zero but one for a given $i$. This occurs for instance when the scatterers are sufficiently well separated and their intensities are sufficiently different. However, the resonance phenomenon described in Section \ref{ssec:bss} means the ideal situation is not a general one. While the method was originally developed to perform \textit{selective focusing}, i.e. to focus the field on particular scatterers, it is possible to follow a similar approach for imaging by taking inverse Fourier transforms of the $U_i$. Because it is not generally known how many or which fields are part of the linear combination, an image formed with one column will present one or several peaks. According to \fref{bG1}, the sharpness of these peaks depends on the measurement setting and on the strength of the fluctuations of the phase $S$. One (normalized) image is then obtained with
$$
 \widetilde J_{j}(\by)=\int_{D_C} U_j(\bu)e^{i k \by \cdot \bu/z_s}d\bu, \qquad J_j(\by)= \frac{\widetilde{J_j}(\by)}{\max_\bx |\widetilde J_j(\bx)|},
 $$
 where we recall that $D_C$ is the domain on the camera where measurements are performed. The image is noisy when the fluctuations of $S$ are too strong, and a filter $F$ can be applied to remove part of the noise. It is not our goal to optimize the choice of the filter as we expect they all perform qualitatively the same, and we then set for simplicity
      \be \label{defFIlt}
      F(g(\by))=\un\Big(\{\by \in D_C: |g(\by)| > \eta_F \max_{\bx} |g(\bx)|\}\Big),
      \ee
which filters out points with intensity less than $\eta_F$ times the maximal intensity of the image. The threshold $\eta_F$ is chosen between 0 and 1 and $\un(A)$ denotes the characteristic function of the set $A$.
In order to build an image that exhibits all scatterers, we sum the different filtered images to get
 $$
J(\by)=\left| \frac{1}{N} \sum_{j=1}^N J_j(\by) F(J_j(\by))\right|.
$$

\paragraph{The improved DORT method.} We improve the DORT method by exploiting the source separation. Assume the matrix $G$ was estimated using BSS up to the degeneracies. The $j$ column of $G$ gives access to the Green's function $\calG(\bu,\bx_j)$, where $\bu$ runs through the pixels on the camera. It is key here that the BSS yields the Green's functions individually and not as potential linear combinations, as in DORT. 




Based on expression \fref{bG1}, we define the following function
      \be \label{defg}
      g_{ij} (\by)=\int_{D_C} \calG(\bu,\bx_i)\bar{\calG}(\bu,\bx_j) e^{i k \by \cdot \bu/z_s}d\bu.
      \ee
      We recall that $D_C$ is the square of side $2L_C$ centered at zero. When $S=0$, we find
      \begin{align} \nonumber
      g_{ij} (\by)&=\int_{D_C} e^{i k(\by-(\bx_i-\bx_j)) \cdot \bu/f}d\bu\\
      &= -4 L_C^2 \; \textrm{sinc}\left(\frac{ k L_C(\by-(\bx_i-\bx_j)) \cdot \mathbf{e}_1}{z_s}\right)\textrm{sinc}\left(\frac{ kL_C(\by-(\bx_i-\bx_j)) \cdot \mathbf{e}_2}{z_s}\right) \label{expg}
      \end{align}
      where $\bu=u_1 \mathbf{e}_1+u_2 \mathbf{e}_2$ and $\textrm{sinc}(x)=\sin x/x$. In this case, the function $g_{ij}$ presents a peak at $\by=\bx_i-\bx_j$, with width $\ell_{c,\rm{out}}=\pi z_s/ kL_C$, indicating the relative position between scatterer $i$ and scatterer $j$. 

      When $S$ does not vanish, but $\bx_i$ and $\bx_j$ are sufficiently close, i.e. $|\bx_i-\bx_j| \ll l_c$ ($l_c$ previously defined in \fref{defLC}), then $S(\bx_i+\frac{L}{f} \bx) \simeq S(\bx_j+\frac{L}{f} \bx)$ and the random phases in \fref{defg} compensate each other. This is the reason for the definition of \fref{defg}. 
      When $\bx_i$ and $\bx_j$ are far apart, then $g_{ij}$ essentially consists of a noisy background without indicating the relative position $\bx_i-\bx_j$.

      The peak in $g_{ij}$ is already sharper than those in $J_i$ since part of the randomness is filtered out. Since we know there is just one peak in $g_{ij}$ because of the separation, the filtering procedure is actually very efficient to sharpen the peak even more. This is not always the case with DORT, as the number of peaks is unknown beforehand and the images are substantially noisier, resulting in the possibility that some of the scatterers' peaks are wrongly filtered out.   


      We hence apply the same filter $F$ (defined in \fref{defFIlt}) to $g_{ij}$, and denote by
      $$f_{ij}(\by)=F(g_{ij}(\by))g_{ij}(\by)$$
      the resulting image. 



      
      For $i=1,\cdots,N$, the procedure is repeated for all indices $j=1,\cdots,N$ such that $j>i$. We then build an image following the same method employed in \cite{GiganNMF}: let 
    \be \label{defsig}
\beta_{ij}=\max_{\by} |f_{ij}(\by)|, \qquad \beta_i=\max_{j=1,\cdots,N,\; j\neq i} \beta_{ij}.
\ee
Scatterers $j$ that are close to the scatterer with index $i$ have larger $\beta_{ij}$ and tend to be better estimated than those that are farther away and as a consequence have smaller $\beta_{ij}$. We then set an arbitrary threshold $\eta_I$ to remove scatterers that are too far, and a first image $I_i$ is obtained with the formula
$$
I_i=\sum_{j=1}^N |f_{ij}| h_e(\beta_{ij}- \eta_I \beta_{ii})/\beta_{ij}, \qquad i=1,\cdots,N,
$$
where $h_e$ is the heaviside function. The procedure is repeated for all $i=1,\cdots,N$, producing a set of images $(I_i)_{i=1,\cdots,N}$. These images are finally merged as in \cite{GiganNMF} by estimating the relative spatial shift between them: let $\by_{ij}^*$ the point where $\beta_{ij}$ in \fref{defsig} is achieved, and suppose our first image is $I_1$; the image that will be merged with $I_1$ is the one producing the largest $\beta_{1j}$ in \fref{defsig}. Let $j_1$ be such that $\beta_1=\beta_{1j_1}$; image $I_{j_1}$ is then shifted by $\by^*_{i j_1}$ and added to $I_1$. The procedure is repeated starting from $I_{j_1}$ until all indices are exhausted, yielding an image $I(\by)$.





The image resolution is limited by both the quality of the separation and the sharpness of the peak in $g_{ij}$. Omitting the 3-scatterer resonance, we have seen that scatterers can theoretically be separated when they are at a distance of at least $\ell_{c,\rm{in}}$ when using $V$, and at least $\ell_{c,\rm{out}}$ when using $U$. The peak in $g_{ij}$ has width $\ell_{c,\rm{out}}$. The resolution is then given by $\max(\ell_{c,\rm{in}},\ell_{c,\rm{out}})$ in the $V$ case and $\ell_{c,\rm{out}}$ in the $U$ case.

We conclude by adding that this procedure is only able to estimate the relative positions of the scatterers, not their intensity nor their absolute positions on the sample plane.


\subsection{Simulations} \label{sec:simu}

The incoming and outgoing fields of Sections \ref{sec:in} and \ref{sec:out} are computed as follows. The $\by$ integral in the SLM plane in \fref{WSLM} is evaluated with an fast Fourier transform (FFT): the domain of integration is chosen so that the variable $\bx$ (here the dual Fourier variable of $\bu=\by/\lambda z_s$) belongs to the square of side length $L_D$ centered at zero. If the latter is discretized with a stepsize $h$, then the Fourier transform in the variable $\bu$ is calculated over a square of length $1/h$ centered at zero. We set $h=L_D/2^{11}=L_D/2048$.

The convolution in \fref{UL} and the Fourier transform in \fref{bG} are computed with FFTs over the discretized square of side length $L_D$ centered at zero and stepsize $h$. The random fields $V_{\rm{SLM}}$ and $V$ are generated with Fourier series; see \cite{BPV25} for more details.

Table \ref{tab} gathers the values of some parameters used in the simulations (where $\lambda=1$).
\begin{table}[h!]
\begin{center}
\begin{tabular}{|c|c|c|c|c|c|c|c|c|c|c|c|c|c|}
    \hline
     $L_D$ & $h$ &$z_s$ & $L$& $L_C$ & $R_s$& $R_{\rm{SLM}}$ &$\ell_0$& $\ell_{\rm{SLM}}$ & $\sigma_{{\rm SLM}}$& $\eta_I$  \\
     \hline
     500$\lambda$ & $L_D/2048$ & 500$\lambda$ & 0.25$z_s$ & NA$z_s$& 0.45$L_D$&NA$z_s$ &$4\lambda$  & 2$\lambda$ & 2 &0.2\\
     \hline
\end{tabular}
\end{center}
\caption{Parameters used in the simulations.}
\label{tab}
\end{table}
\subsubsection{SHG imaging}

\paragraph{Separation with $V$.} We place $N=50$ identical scatterers at random uniformly in a square of side length $12 \lambda$ and illuminate them with $N_r=10000$ fields. The strength of the fluctuations on the random screen are set such that $k_0 \sigma=1$. In this case, the size of an isoplanatic patch is $l_c=4 \sqrt{2}\lambda\simeq 5.66 \lambda$. This is a non-isoplanatic imaging scenario since the scatterers are spread over two patches. We set as well NA=0.75. The pixel size on the image is then $\lambda/4\textrm{NA}\simeq 0.33 \lambda$.


We use the RobustICA algorithm to perform the separation and represent in Figures \ref{fig:SHGV} and \ref{fig:SHGV2} the images obtained with both the improved and regular DORT methods. We set a strong filter in Figure \ref{fig:SHGV} with $\eta_F=0.99$ and observe that the improved DORT method properly locates all scatterers while the DORT method fails to do so. Indeed, the positions shown by DORT are actually largely noise. This is clear based on Figure \ref{fig:SHGV2}, which depicts the evolution of the images as the filter strength $\eta_F $ is increased. When $\eta_F=0.5$, the scatterers do not stand out against background noise in the standard DORT image as they do in the images obtained with the improved DORT method. As $\eta_F$ increases, the filter removes the noisy background in the improved DORT images while it removes both background and scatterers in the DORT image.

This is the key mechanism at the core of the images obtained with the improved DORT method: the non-filtered images are already better since part of the noise is removed by the phase compensation in \fref{defg}. In addition, the filtering procedure is very efficient since there is just one scatterer in the images before merging and filtering, rendering the noise easier to address than with the DORT method. This is all made possible by the resolution of the source separation problem. 

An important issue with using $V$ for the separation is the large number of illuminations needed to obtain a well separated solution with RobustICA, here $N_r=10000$. It is possible to decrease $N_r$, but at the expense of separation quality. In some situations, the burden can be alleviated by using the matrix $U$ for the separation, as we pursue next.



\paragraph{Separation with $U$.}  Separation proves substantially more difficult than when using $V$: while RobustICA always does converge, it may do so to a non-separated solution. Note that it is possible to impose the sign of the kurtosis of the source (here the kurtosis is negative) in RobustICA to improve the solutions, while this was not necessary for the $V$-based separation. RobustICA is initialized with random perturbations of the identity of the form $I+M$, where $M$ is a random matrix with entries drawn uniformly between $-0.05$ and $0.05$.

We choose parameters that ensure that the algorithm always converges to separated solutions. It is possible to consider more difficult imaging scenarios, but in this case proper separation is only obtained for some initial conditions. We set $N_r=500$ and place randomly $N=10$ identical scatterers in a window of size $8 \lambda$. The fluctuations of the screen remain the same with $k_0 \sigma=1$.


As above, we see in Figure \ref{fig:SHGU} that improved DORT locates the scatterers properly while DORT does not. We do not depict the evolution as the filter strength increases as the mechanism is similar: the filter removes both scatterers and noise for DORT, while it preserves the scatterers for improved DORT. The overall cost is much lower than for the $V$-based separation since only $N_r=500$ realizations are needed.

As in \cite{BPV25}, in the full speckle regime it is possible to image more complex objects in the RGO regime. There are no particular geometrical constraints on the objects when using $V$ for the separation, the main limitation being due instead to the number of illuminations. When using $U$, objects with symmetries (such as a circle or a cross) tend to result in BSS problems that are more difficult to solve. When these symmetries are broken, by e.g. perturbing the scatterers randomly around the original shape, then the separation is in general easier. A possible explanation is that symmetries introduce non-separated extrema in the functional that the algorithm eventually converges to.  

\begin{figure}[h!]
\begin{center}
  \includegraphics[scale=0.195]{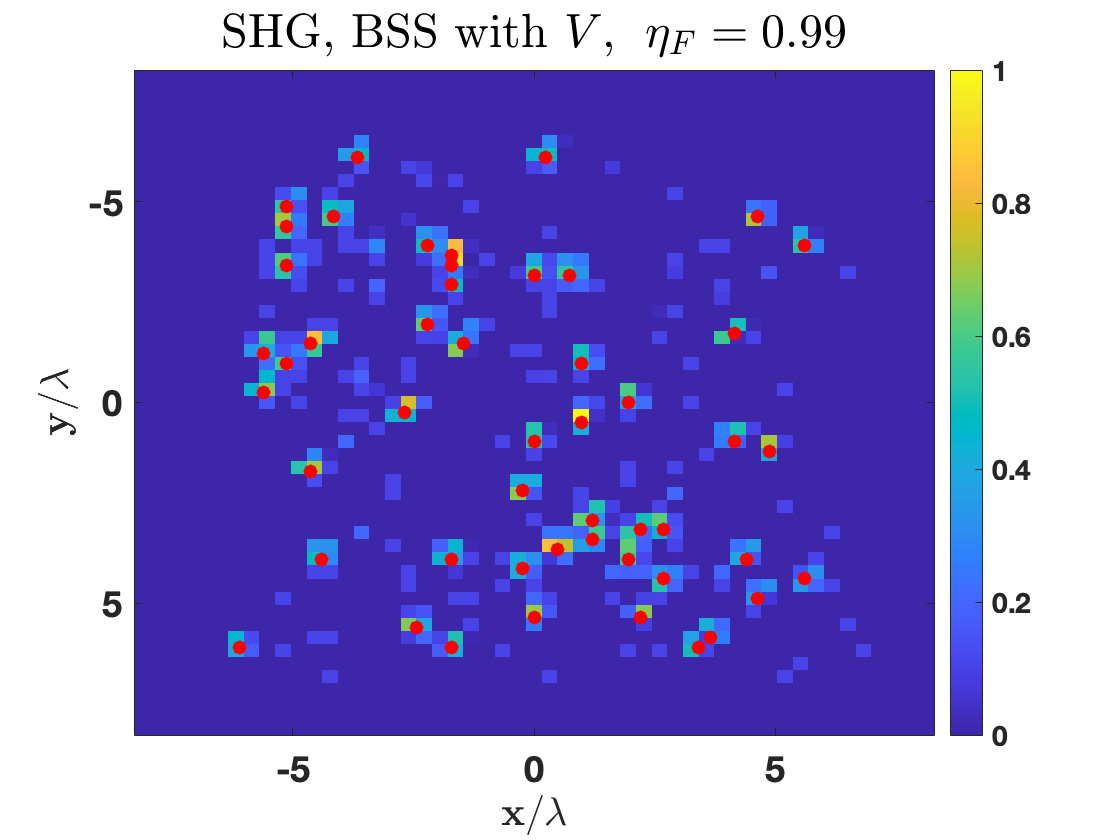}
  \includegraphics[scale=0.195]{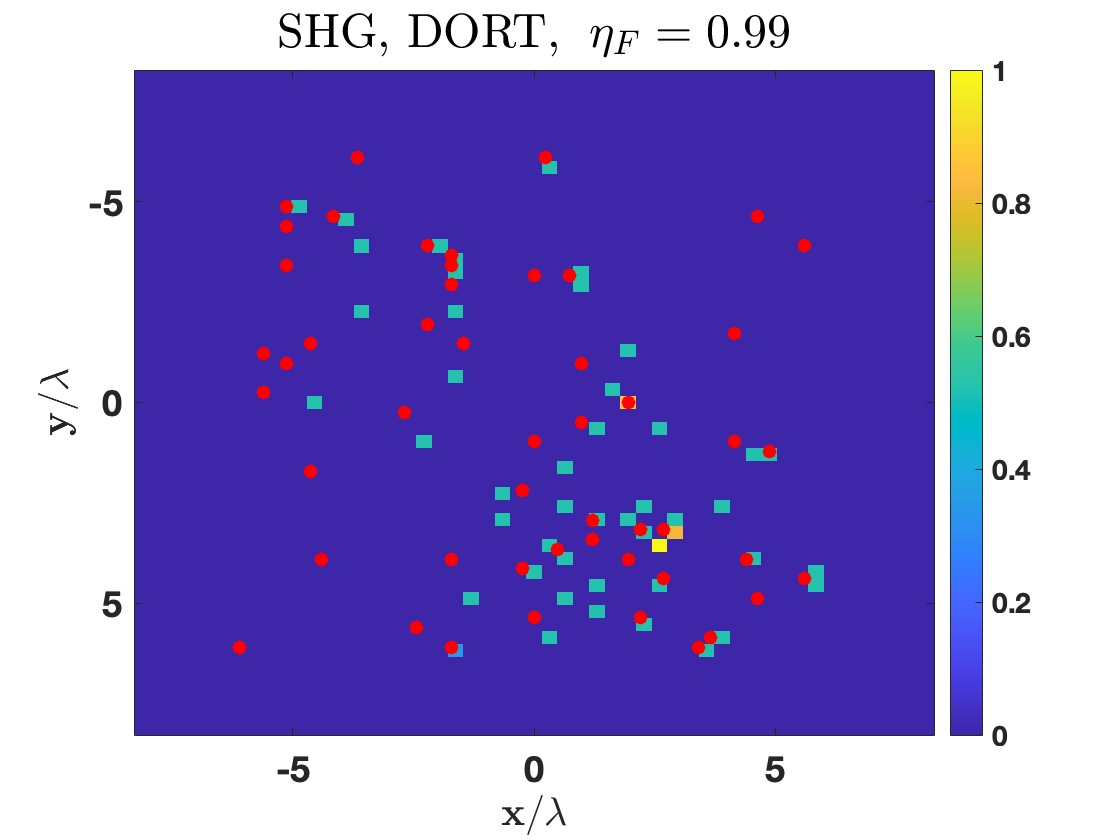} 
\end{center}
\caption{SHG imaging with BSS based on $V$. Left: Improved DORT method. Right: DORT method. The red dots represent the exact position of the scatterers.}
\label{fig:SHGV}
\end{figure}
\begin{figure}[h!]
\begin{center}
  \includegraphics[scale=0.13]{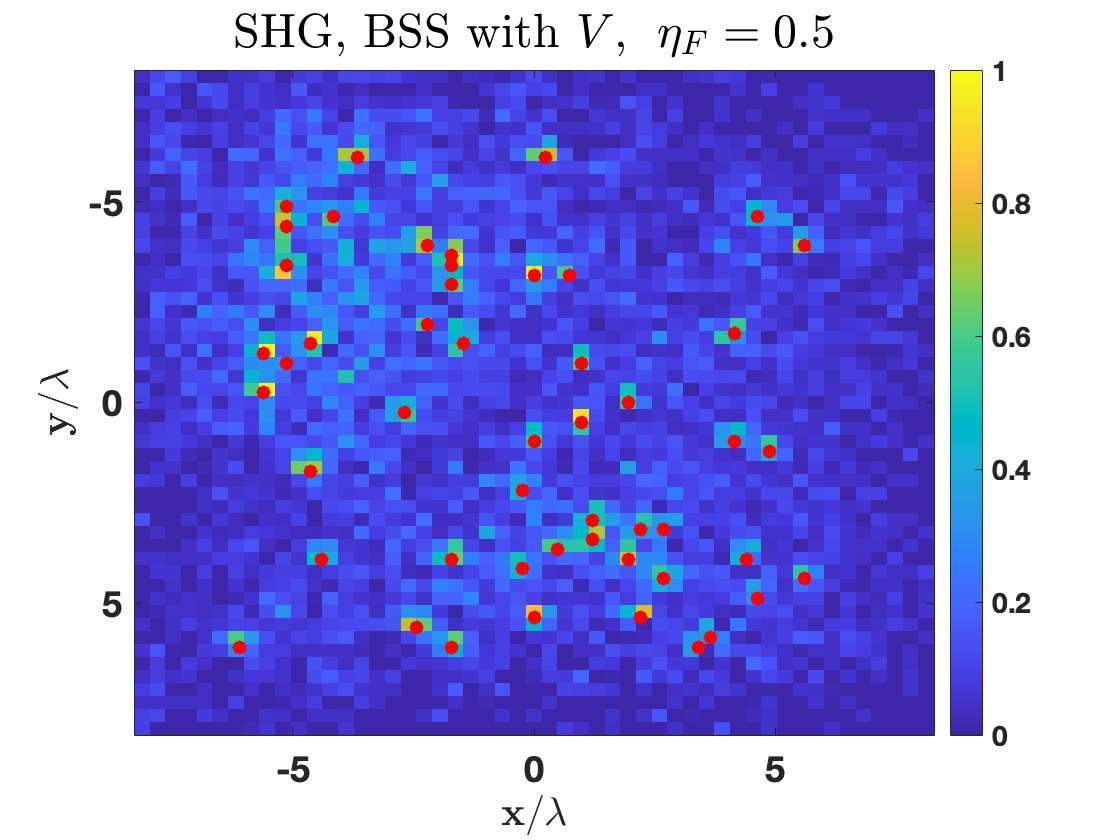}
  \includegraphics[scale=0.13]{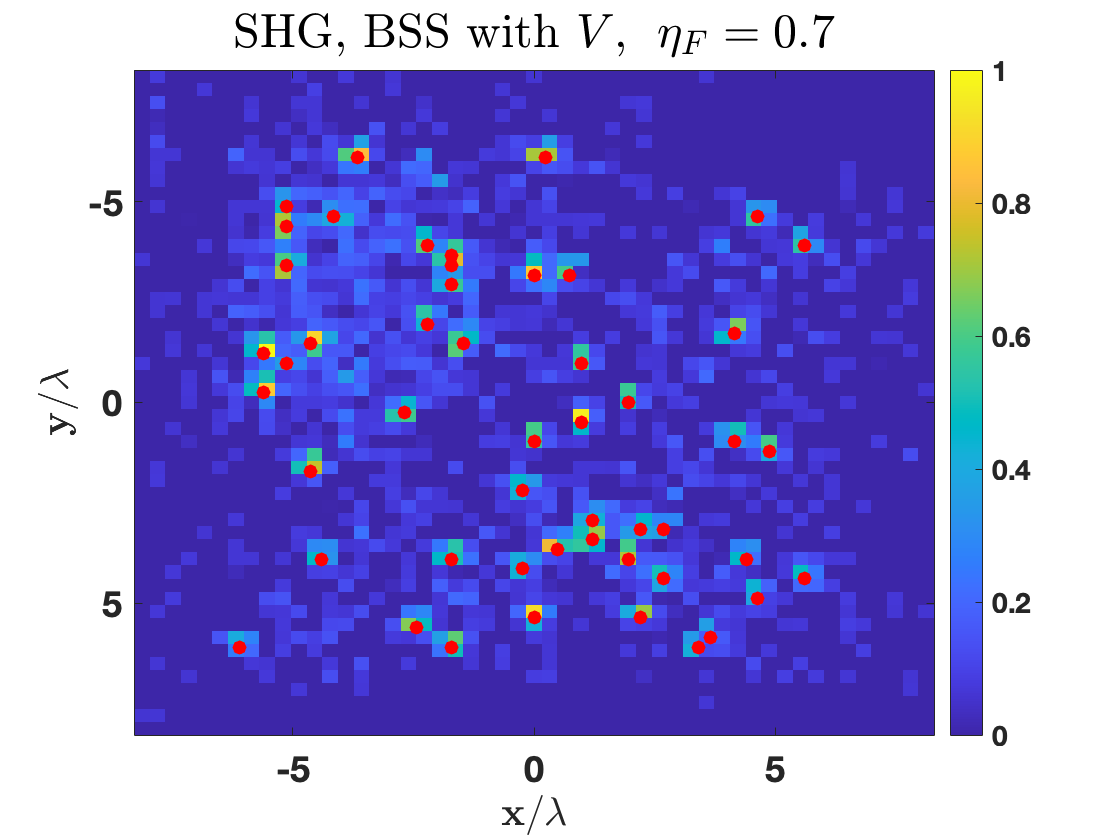}
    \includegraphics[scale=0.13]{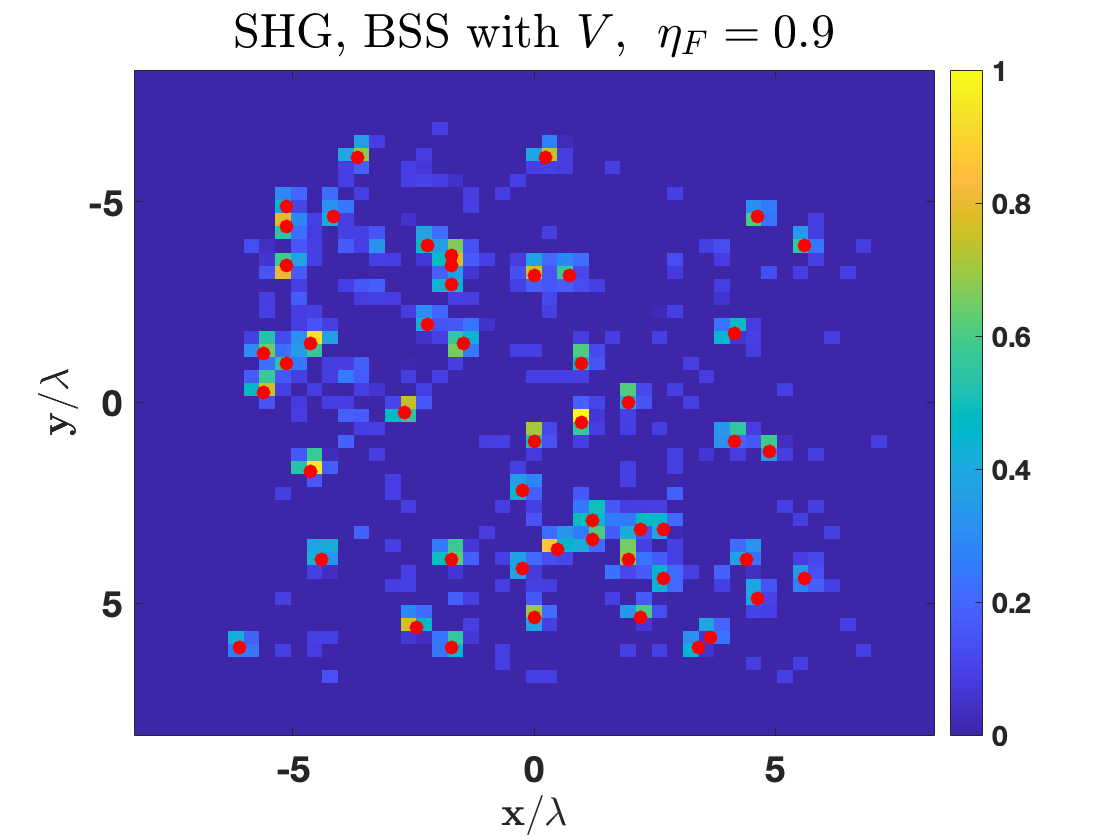} 
\includegraphics[scale=0.13]{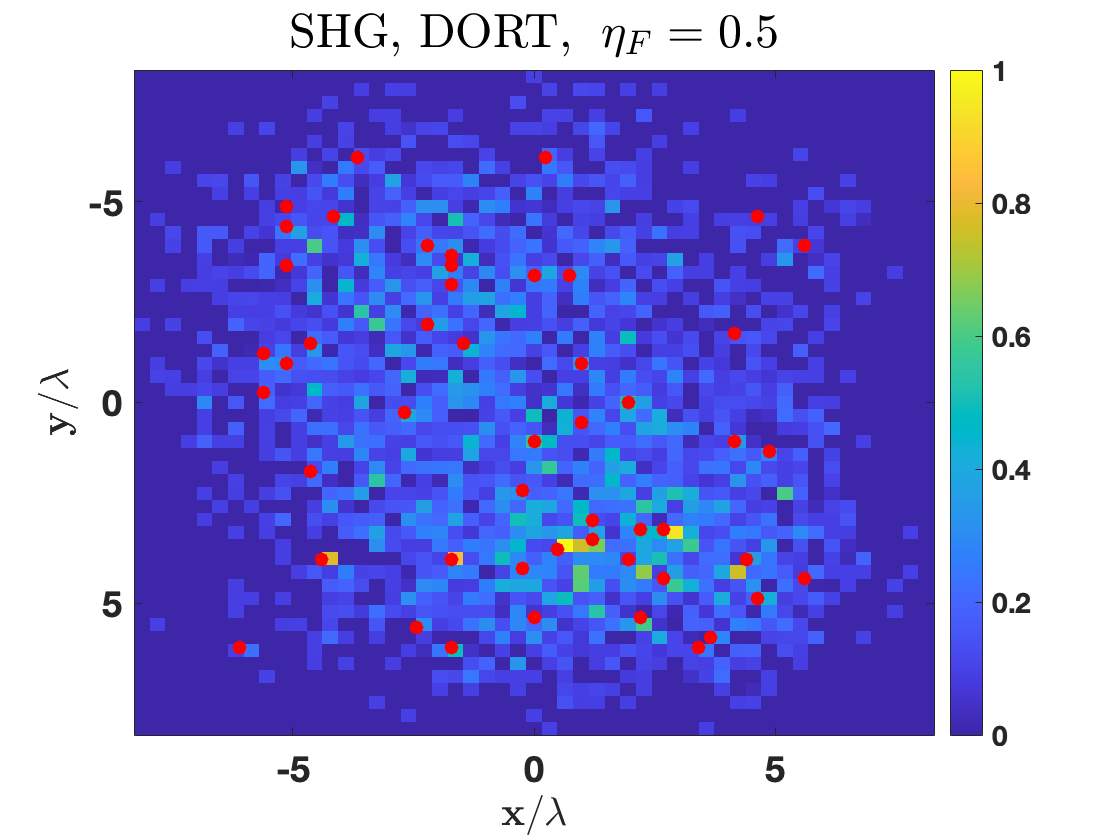}
  \includegraphics[scale=0.13]{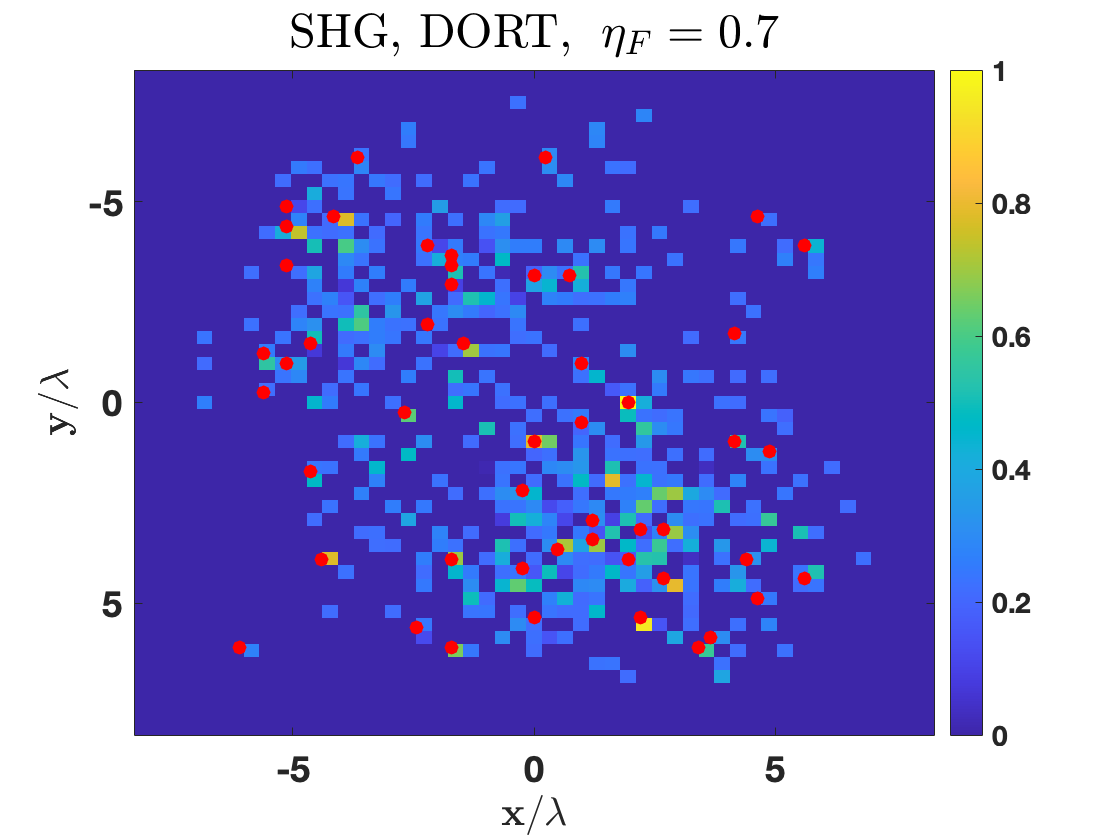}
    \includegraphics[scale=0.13]{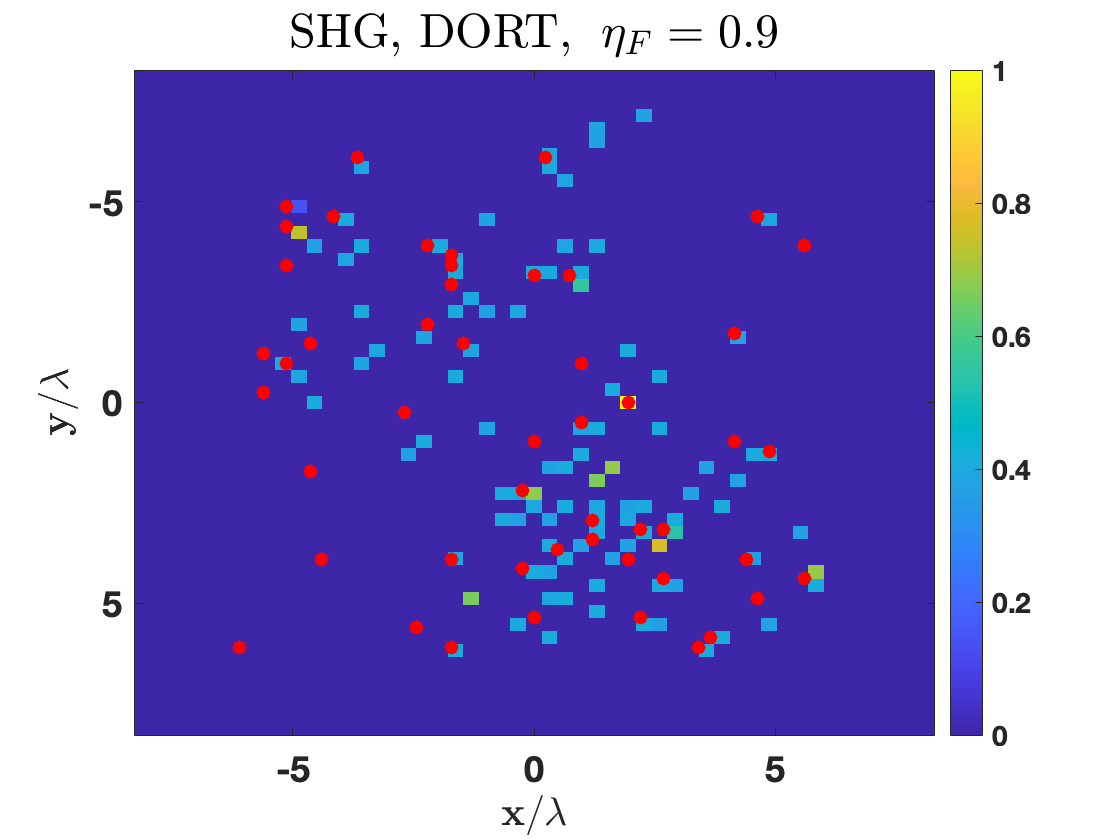} 
\end{center}
\caption{SHG imaging with BSS based on $V$. Upper row: Improved DORT method with $\eta_F=0.5,0.7,0.9$. Lower row: DORT method with $\eta_F=0.5,0.7,0.9$. The red dots represent the exact position of the scatterers.}
\label{fig:SHGV2}

\end{figure}

\begin{figure}[h!]
\begin{center}
  \includegraphics[scale=0.195]{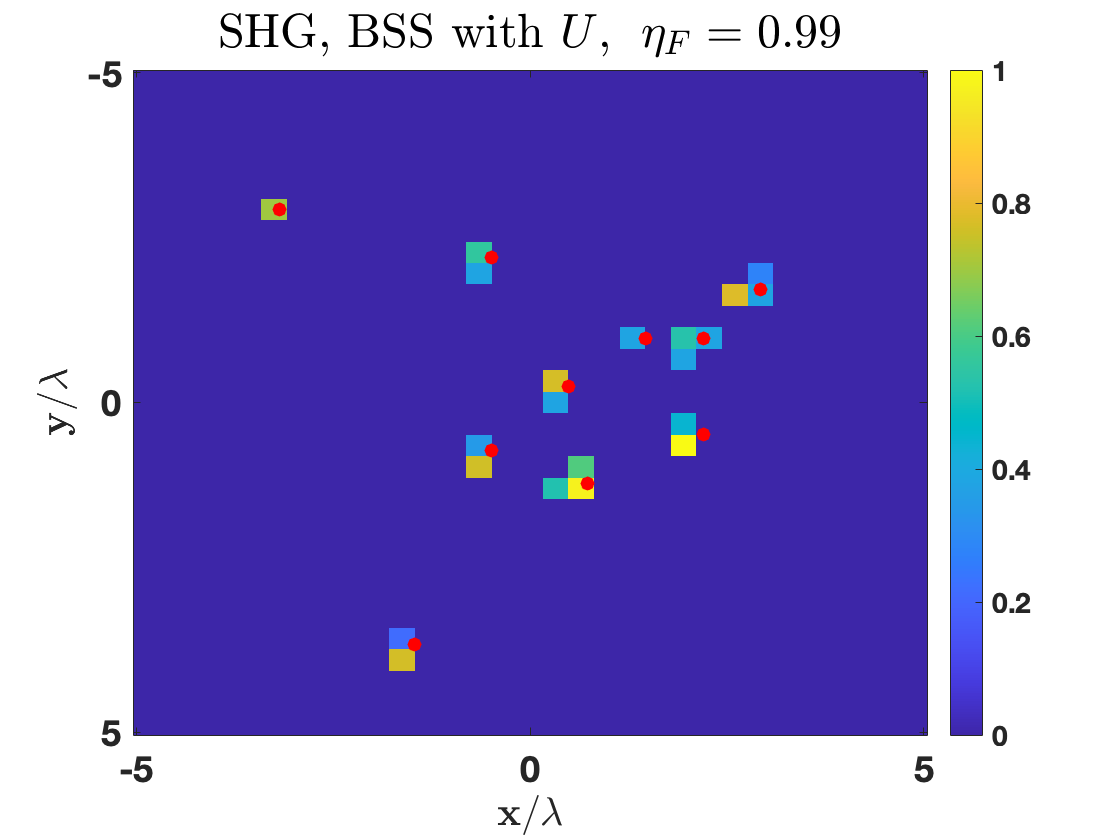}
  \includegraphics[scale=0.195]{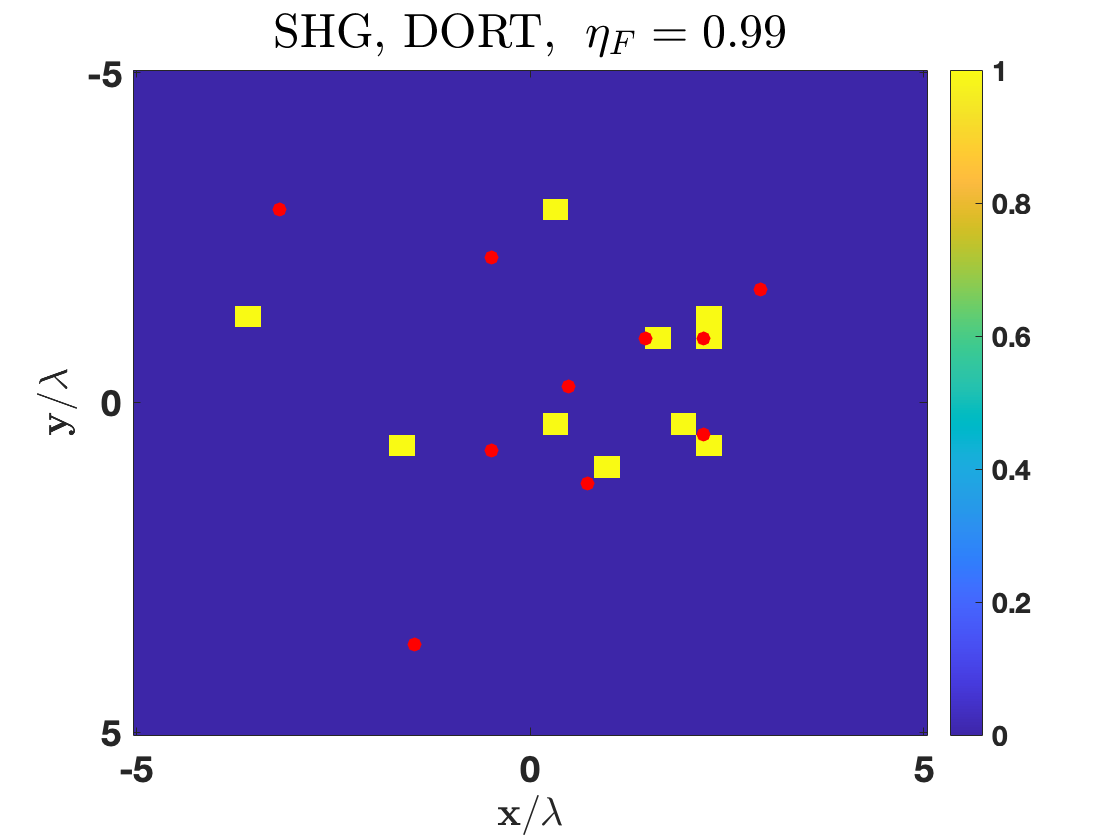} 
\end{center}
\caption{SHG imaging with BSS based on $U$. Left: Improved DORT method. Right: DORT method. The red dots represent the exact position of the scatterers.}
\label{fig:SHGU}
\end{figure}
\subsubsection{Linear scattering imaging} We consider here a classical linear scattering imaging scenario. Recall that it is not possible to use $V$ for the separation since its columns are made of Gaussian random fields and we therefore can only rely on $U$. As in the SHG case, $U$-based separation is more difficult than $V$-based. We choose again a configuration where RobustICA always converges to separated solutions. We set $N_r=500$ and place randomly $N=10$ identical scatterers in a window of size $5 \lambda$. The fluctuations are increased to $k_0 \sigma=3$ in order to lower the size of the isoplanatic patches, which becomes $l_c=3.77\lambda$. The pixel size on the image is now $\lambda/2\rm{NA}$, and to keep the same numerical value as the SHG case we set NA=1.5.


An image of one realization is shown in Figure \ref{fig:linear}. The conclusion is similar to SHG imaging: improved DORT correctly locates the scatterers while DORT does not. As before, we can also consider more complex imaging scenarios that exhibit a (much) greater sensibility to the initial condition of the separation algorithm. 

\begin{figure}[h!]
\begin{center}
  \includegraphics[scale=0.195]{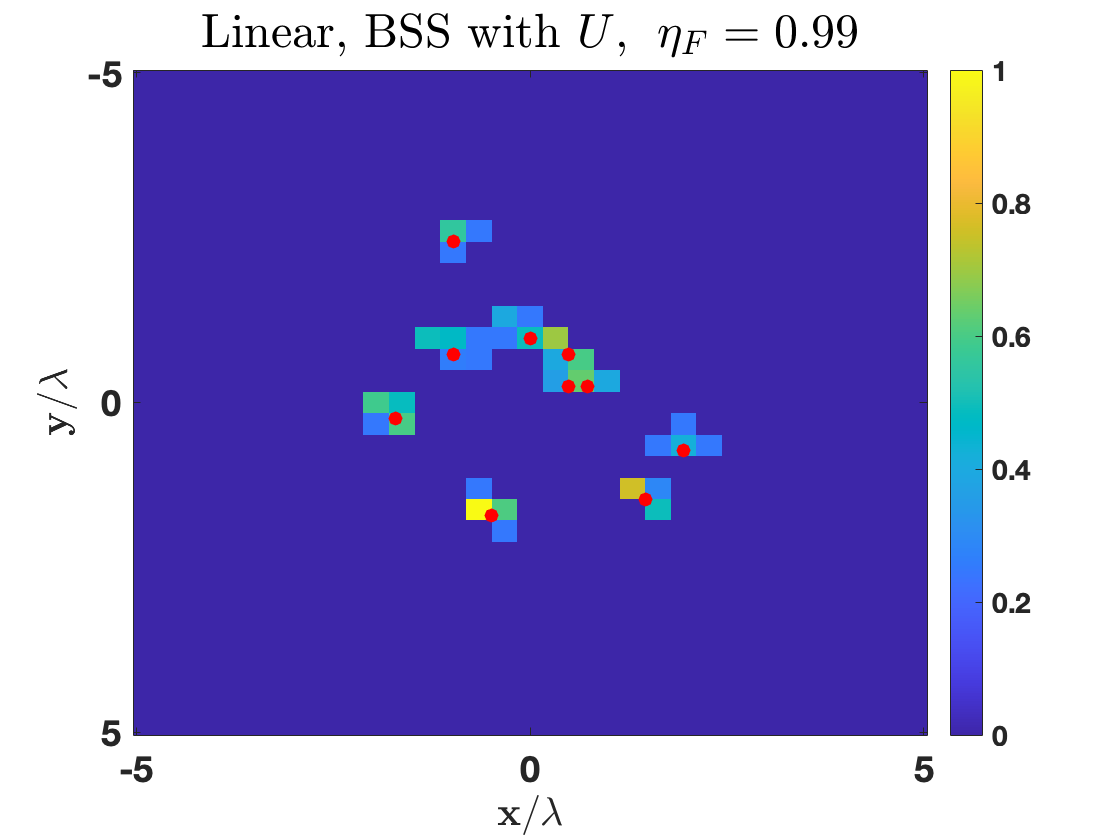}
  \includegraphics[scale=0.195]{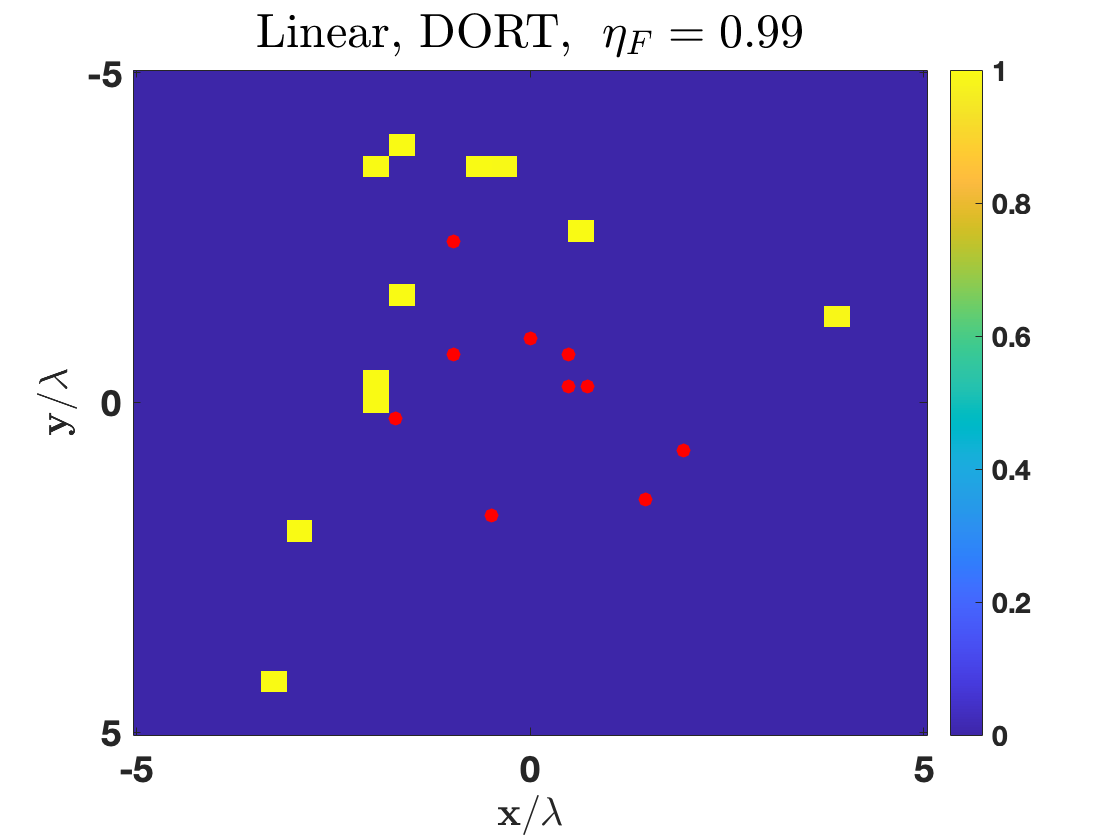} 
\end{center}
\caption{Linear scattering imaging with BSS based on $U$. Left: Improved DORT method. Right: DORT method. The red dots represent the exact position of the scatterers.}
\label{fig:linear}
\end{figure}

\section{Conclusion}
In this work, we have generalized to arbitrary correlated complex-valued sources the standard theoretical separability criteria for blind source separation problems. We have also obtained an error estimate in terms of an explicit quantity involving fourth-order moments of the sources. We verified these separability conditions in the speckle regime and the random geometrical optics regime.  In the speckle case, we exploited the Gaussian character of the random fields, and in the geometrical optics case the fact that fields are perturbations of plane waves. We moreover proposed an imaging technique based on the blind source separation problem that improves upon images obtained with the classical DORT method. The method hinges on the fact that blind source separation provides us with individual Green's functions and not linear combinations.

A few interesting and salient points must be addressed in future works. Because the theoretical result is local and does not ensure the global convergence of numerical algorithms, an important problem is to investigate how the global convergence result of \cite{douglasICA} in the case of real-valued independent sources translates to the case of complex correlated sources. This could in turn help us better understand situations where the algorithms converge to non-separated solutions, as we have observed when using the $U$ matrix for the separation.

From an imaging perspective, the method we propose can operate under stronger scattering conditions than the DORT method, in particular in regimes where the isoplanatic patches are smaller. This will be tested experimentally in the holography setting we described in Section \ref{ssec:mes}. 

\begin{appendix}

  \section{Proof of Theorem \ref{th:err}} \label{app:proof}

  Our goal is to estimate the distance on the complex unit sphere between $w_0$ and $w_e$, which we recall are respectively the normalized first column of $(A^{-1})^*$ and the minimizer of $K$ on the complex unit sphere under the condition $\Re(w,w_0) \geq 0$. 
  We start by establishing some results that will be used throughout the proof, and then move on to first and second order optimality conditions. The error estimate is finally obtained in a conclusion.

  \subsection{Preliminaries} Perturbations of $w_0$ on the feasible set are parametrized by 
$$w(t)=\frac{w_0+t w}{\|w_0+t w\|},$$
where $t \in [-1,1]$, $w$ is an arbitrary vector in $\Cm^N$, and $w_0$ has norm one. Let $\calJ(t):=K(w(t))$. We will choose $w=w_e-w_0$, and expand $\calJ(1)$ as
\be \label{TayJ}
\calJ(1)=\calJ(0)+\calJ'(0)+\frac{1}{2}\calJ''(0)+\frac{1}{3!}\calJ'''(t_*), \qquad t_* \in (0,1).
\ee
We will estimate the r.h.s. from below, and for this need first to compute derivatives of $w(t)$. We denote below $\dot w(t):=dw(t)/dt$, and so on with higher order derivatives. Direct calculations yield
\bea
\dot w(t)&=&\frac{w}{\|w_0+t w\|}-(w_0+ tw) \frac{\Re (w,w_0+tw)}{\|w_0+t w\|^{3}} \label{w1}\\ \nonumber
\ddot w(t)&=&-\frac{2 w \Re (w,w_0+tw)}{\|w_0+t w\|^{3}}-(w_0+tw)\frac{\|w\|^2}{\|w_0+t w\|^{3}}\\
&&+3 (w_0+tw)\frac{(\Re (w,w_0+tw))^2}{\|w_0+t w\|^{5}} \label{w2}\\
\dddot w(t)
&=&-\frac{3 w \|w\|^2}{\|w_0+t w\|^{3}}+\frac{9 w (\Re (w,w_0+tw))^2}{\|w_0+t w\|^{5}}+(w_0+tw)\frac{9\|w\|^2 \Re (w,w_0+tw)}{\|w_0+t w\|^{5}}\nonumber \\
&&-15 (w_0+tw)\frac{(\Re (w,w_0+tw))^3}{\|w_0+t w\|^{7}}. \label{w3}
\eea
The first and second order optimality conditions involve $\dot w(0)$ and $\ddot w(0)$, which, according to \fref{w1}-\fref{w2}, are given by
\bea
\dot w(0)&=&w-w_0\Re (w,w_0). \label{dotzero}\\
\ddot w(0)&=&-2 w \Re (w,w_0)-w_0(w,w)+3w_0(\Re (w,w_0))^2 \label{ddotzero}.
\eea

Given the above expressions, it is natural to single out the contribution of the perturbation vector $w$ along $w_0$, and we write as a consequence $w=u w_0+w_\perp$ for some scalar $u$. Setting $w_\perp$ such that $(w_0,w_\perp)=0$ is a reasonable choice, but we make a different one that seems to overall lead to simpler calculations. Recalling that $C^{(s)}=\E\{s s^*\}$ denotes the correlation matrix of the sources, we choose $w_\perp$ such that $([C^{(s)}]^{-1} w_0,w_\perp)=0$. The motivation is as follows. Since $\E\{x x^*\}=I$, together with the fact that the sources $s$ and the noise $n$ are independent, we notice first that $A C^{(s)} A^*+C^{(n)}=I$, for $C^{(n)}=\E\{n n^*\}$ the correlation matrix of $n$. Some algebra leads to the relation $[C^{(s)}]^{-1} =A^*A+A^*C^{(n)}A[C^{(s)}]^{-1} $, which we rewrite as $[C^{(s)}]^{-1} =A^*A+\calO(C^{(n)})$. Without noise, we hence have simply $[C^{(s)}]^{-1} =A^*A$. Since $C^{(s)}$ is Hermitian, we have equivalently $[C^{(s)}]^{-1} =A A^*+\calO(C^{(n)})$.

Exploiting this last relation then  gives us
\bee
0&=&(w_0, [C^{(s)}]^{-1} w_\perp)=(w_0,A A^* w_\perp)+\calO(C^{(n)})=(A^*w_0,A^* w_\perp)+\calO(C^{(n)})\\
&=&(\gamma e_1,A^* w_\perp)+\calO(C^{(n)}),
\eee
so that
\be \label{errw1A}
(e_1,A^* w_\perp)=\calO(C^{(n)}),
\ee
which will be used extensively in the proof. When there is no noise, this means that $w_\perp$ is orthogonal to $A e_1$.

We will also exploit the relation
\be \label{errC}
[C^{(s)}]^{-1}-(\det C^{(s)})^{-1} I=\calO(C^{(s)}-I),
\ee
which follows from the observation that the off-diagonal terms of $[C^{(s)}]^{-1}$ involve at least one factor $C^{(s)}_{ij}$ for $i \neq j$, so that $[C^{(s)}]^{-1}_{ij}=\calO(\max_{p \neq q}|C^{(s)} _{pq}|)$ when $i \neq j$. Since moreover the diagonal of $C^{(s)}$ is the identity matrix by construction, the diagonal terms of $[C^{(s)}]^{-1}$ are formed by $(\det C^{(s)})^{-1}$ multiplied by sums of terms involving $C^{(s)}_{ii}$ and at least one factor $C^{(s)}_{ij}$ for $i \neq j$. This justifies \fref{errC}. 

We will need to be more precise with the scalar $u$ in the decomposition $w=u w_0+w_\perp$. This is addressed in the next lemma:
\begin{lemma} \label{lemU}
Let $w=w_e-w_0$ and $w=uw_0+w_\perp$. Then, we have $u=u_0+u_1$, where $u_0\geq -1$, $u_0=\calO(C^{(s)}-I)+\calO(\|w_\perp\|)$ and $u_1=\calO(C^{(s)}-I)$.
\end{lemma}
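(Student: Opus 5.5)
We will obtain $u$ explicitly by pairing the decomposition $w=uw_0+w_\perp$ with $[C^{(s)}]^{-1}w_0$, which annihilates $w_\perp$ by construction. This gives
$$
u=\frac{([C^{(s)}]^{-1}w_0,w)}{([C^{(s)}]^{-1}w_0,w_0)}=\frac{([C^{(s)}]^{-1}w_0,w_e)}{([C^{(s)}]^{-1}w_0,w_0)}-1 .
$$
Next we use \fref{errC} to write $[C^{(s)}]^{-1}=(\det C^{(s)})^{-1}I+E$ with $E=\calO(C^{(s)}-I)$. Since $\det C^{(s)}=1+\calO(C^{(s)}-I)$, the denominator is $(\det C^{(s)})^{-1}+(Ew_0,w_0)$, a positive quantity close to $1$ when $C^{(s)}-I$ is small; this follows from the positive definiteness of $[C^{(s)}]^{-1}$. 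The numerator splits as $(\det C^{(s)})^{-1}(w_0,w_e)+(Ew_0,w_e)$.

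We then define
$$
u_0:=\frac{(\det C^{(s)})^{-1}(w_0,w_e)}{([C^{(s)}]^{-1}w_0,w_0)}-1,\qquad u_1:=\frac{(Ew_0,w_e)}{([C^{(s)}]^{-1}w_0,w_0)} .
$$
Because $\|w_e\|=1$ and the denominator is bounded below by a constant, we get $u_1=\calO(E)=\calO(C^{(s)}-I)$ immediately.

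For $u_0$, recall that by the choice of phase $(w_e,w_0)=|(\widetilde w_e,w_0)|\ge 0$. Hence $(w_0,w_e)$ is real and nonnegative. The first term of $u_0$ is therefore a nonnegative real number, as a ratio of nonnegative and positive quantities, and so $u_0\ge -1$.

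It remains to show $u_0=\calO(C^{(s)}-I)+\calO(\|w_\perp\|)$. Writing $r=(\det C^{(s)})^{-1}/([C^{(s)}]^{-1}w_0,w_0)=1+\calO(C^{(s)}-I)$, we have $u_0=r(w_0,w_e)-1$, so it suffices to show $1-(w_0,w_e)=\calO(C^{(s)}-I)+\calO(\|w_\perp\|)$. For this we use the normalization $\|w_e\|=1$ with $w_e=(1+u)w_0+w_\perp$, which gives
$$
1=|1+u|^2+2\Re\big((1+u)^*(w_0,w_\perp)\big)+\|w_\perp\|^2 .
$$
Since $|1+u|$ is bounded (by $\|w_e\|$ and the above), this yields $|1+u|^2=1+\calO(\|w_\perp\|)$, hence $|1+u|=1+\calO(\|w_\perp\|)$.

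On the other hand, $(w_0,w_e)=(1+u)+(w_0,w_\perp)$. We also have $1+u=1+u_0+u_1=r(w_0,w_e)+u_1$, so $1+u$ equals a nonnegative real number up to $\calO(C^{(s)}-I)$. Combining these facts, $(w_0,w_e)$ is real and nonnegative with modulus $1+\calO(\|w_\perp\|)+\calO(C^{(s)}-I)$. This gives the desired bound on $1-(w_0,w_e)$ and hence on $u_0$.

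The main obstacle is this last step: we must control the phase and modulus of $1+u$ simultaneously. The phase is handled by the alignment of $w_e$ with $w_0$, which makes $(w_0,w_e)$ real and nonnegative, and the modulus by the unit-norm constraint. Throughout, one must check that all constants are uniform, which is where the smallness assumption $M^{(s)}\le\eps$ (keeping $\det C^{(s)}$ and the denominator near $1$) is used.
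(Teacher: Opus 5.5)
Your proof is correct. Up to the bound on $u_0$ it follows the paper: the same formula for $u$, the same splitting via \fref{errC}, and the same use of $(w_e,w_0)=|(\widetilde{w}_e,w_0)|\ge 0$ to get $u_0\ge -1$. Your pairing $([C^{(s)}]^{-1}w_0,w)$ is actually the right one; the paper writes $([C^{(s)}]^{-1}w,w_0)$, which yields $\bar u$, but this is harmless since only moduli are estimated. The two routes differ in proving $u_0=\calO(C^{(s)}-I)+\calO(\|w_\perp\|)$. The paper uses that $u_0$ is real to turn $\|w_e\|=1$ into a quadratic equation in $u_0$, selects the root satisfying $u_0\ge -1$, and applies H\"older continuity of the square root. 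You never solve for $u_0$. Instead you extract $|1+u|=1+\calO(\|w_\perp\|)$ from the normalization and pass to $(w_0,w_e)=(1+u)+(w_0,w_\perp)$, using only that $(w_0,w_e)$ is real and lies in $[0,1]$. This avoids the root selection, which the paper's argument only guarantees when $\|u_1w_0+w_\perp\|<1$. It also bounds $1-(w_0,w_e)$ directly, and that is exactly the quantity controlling $\textrm{dist}(w_e,w_0)$ at the end of the proof of Theorem \ref{th:err}. The phase information on $1+u$ in your penultimate step is superfluous: $1\ge (w_0,w_e)=|(w_0,w_e)|\ge |1+u|-\|w_\perp\|$ already suffices. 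The boundedness of $|1+u|$ and $\|w_\perp\|$, which you need for $\|w_\perp\|^2=\calO(\|w_\perp\|)$, is most cleanly read off from $1+u=r\,(w_0,w_e)+u_1$ and $w_\perp=w_e-(1+u)w_0$. Finally, in either route, replacing the crude bound $|(w_0,w_\perp)|\le\|w_\perp\|$ by $(w_0,w_\perp)=\calO(C^{(s)}-I)$ (shown in the proof of Lemma \ref{opt1}) gives $1-(w_0,w_e)=\calO(\|w_\perp\|^2)+\calO(C^{(s)}-I)$. Together with \fref{estwp}, this would improve the exponent $1/4$ in Theorem \ref{th:err} to $1/2$.
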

\begin{proof}
Since $w=w_e-w_0$, we have
\bee
u=\frac{([C^{(s)}]^{-1}w,w_0)}{([C^{(s)}]^{-1}w_0,w_0)}&=&-1+\frac{([C^{(s)}]^{-1}w_e,w_0)}{([C^{(s)}]^{-1}w_0,w_0)}\\
&=&-1+\frac{(\det C^{(s)})^{-1} (w_e,w_0)}{([C^{(s)}]^{-1}w_0,w_0)}+\frac{(([C^{(s)}]^{-1}-D_C)w_e,w_0)}{([C^{(s)}]^{-1}w_0,w_0)},
\eee
where $D_C$ is the diagonal matrix $(\det C^{(s)})^{-1} I$. The second term on the r.h.s. above is positive since $(w_e,w_0)=|(\widetilde{w}_e,w_0)|$ by construction and $[C^{(s)}]$ is positive definite. We denote the last term on the r.h.s by $u_1$, and remark that $u_1=\calO(C^{(s)}-I)$ thanks to \fref{errC}. 
Summarizing, we can write $u=u_0+u_1$, where $u_0 \geq -1$ and $u_1=\calO(C^{(s)}-I)$.

We will show in addition that $u_0=\calO(\|w_\perp\|)$, which is obtained as follows:  writing $w_e=(1+u)w_0+w_\perp$, we find,  since both $w_e$ and $w_\perp$ have unit norms, 
$$
1=(1+u_0)^2+2 (1+u_0) \Re(w_0,u_1w_0 +w_\perp)+\|u_1w_0+w_\perp\|^2.
$$
The above quadratic equation on $u_0$ has two roots. Only one verifies the condition $u_0 \geq -1$ for arbitrary $w_\perp$ and $u_2$, and this root admits the expression
\bee
u_0&=&-1-\Re(w_0,u_1 w_0+w_\perp)+\sqrt{1+\Re(w_0,u_1w_0+w_\perp)^2-\|u_1 w_0+w_\perp\|^2},
\eee
where the term under the square root is necessarily positive since $u_0 \geq -1$. Since the square root is of H\"older regularity $1/2$, we find, using that $\sqrt{x+y}\leq \sqrt{x}+\sqrt{y}$ when $x,y \geq 0$,
$$
u_0=\calO(|\Re(w_0,u_1 w_0+w_\perp)|+\|u_1 w_0+w_\perp\|)=\calO(|u_1|+\|w_\perp\|).
$$
This ends the proof since we have seen that  $u_1=\calO(C^{(s)}-I)$.
\end{proof}

\bigskip

We will need in addition to control $\calJ'''(t_*)$, which requires estimates on $w^{(p)}(t_*)$, for $w^{(p)}(t)$ the $p$ derivative of $w(t)$, $p=1,2,3$. Tedious, but not difficult, calculations based on \fref{w1}-\fref{w3} and Lemma \ref{lemU} yield the estimate
\be \label{estimW}
\|w^{(p)}(t) \| = \calO\left(\frac{ \max_{p \neq q}|C^{(s)} _{pq}|^p+\|w_\perp\|^p}{\|w_0+tw\|^{2p+1}} \right).
\ee
We write then $w=w_r+i w_i \in \Cm^N$, and set $\tilde w=(w_r,w_i)^T$ as well as
$$
K_0(\tilde w)=K(w). 
$$
Let
$$
M=\max_{\|\tilde w\|=1} \left(\|\nabla K_0(\tilde w)\|+\|\nabla^2 K_0(\tilde w)\|+\max_{j=1,\cdots,N} \|\partial_{(w_r)_j}\nabla^2 K_0(\tilde w)\|+\max_{j=1,\cdots,N} \|\partial_{(w_i)_j} \nabla^2 K_0(\tilde w)\|\right).
$$
We have $M<\infty$ since $K_0 \in C^{\infty}(\Rm^2)$. Using this, together with \fref{estimW}, more tedious calculations allow us to obtain that
\be \label{estimJ}
\calJ'''(t_*) =\calO\left(\max_{p \neq q}|C^{(s)} _{pq}|^3+\|w_\perp\|^3\right).
\ee

We now move on to the optimality conditions. 

\subsection{First order optimality condition}

We have the following Lemma:

\begin{lemma} \label{opt1} We have:
  \bee
  \left.\frac{d K(w(t))}{dt}\right|_{t=0}&=&\calO\big(\max_{j \neq 1} \big|\E\big\{s_j s_1^* |s_1|^2\big\}-\E\big\{s_j s_1\big\}\E\big\{(s_1^*)^2\big\}\big|\big)\\
  &&+\calO(C^{(s)}-I)+\calO\big(\max_{j}\E\big\{|n_j|^2\big\}\big)+\calO\big(\max_{j}\E\big\{|n_j|^4\big\}\big).
  \eee
\end{lemma}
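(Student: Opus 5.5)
The plan is to compute $\calJ'(0)$ explicitly from the definition of $K$ along the curve $w(t)$, and then to split the resulting expression into a part carried by the source $s_1$ alone and a part carried by the other sources and the noise. Since $w(t)$ has unit norm and the data are pre-whitened, $K(w(t))=\E\{|y_t|^4\}-2-|\E\{y_t^2\}|^2$ with $y_t=w(t)^*x$. Set $v=\dot w(0)=w-w_0\Re(w,w_0)$ from \fref{dotzero}, $y_0=w_0^*x$ and $\delta y=v^*x$. Differentiating gives
$$
\calJ'(0)=4\Re\,\E\{|y_0|^2\,\bar y_0\,\delta y\}-4\Re\Big(\overline{\E\{y_0^2\}}\;\E\{y_0\,\delta y\}\Big).
$$
Because $w_0^*A=\gamma e_1^*$, we have $y_0=\gamma s_1+w_0^*n$. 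Writing $b=A^*v$, we also have $\delta y=b^*s+v^*n$. Moreover $\|v\|\le 2\|w\|\le 4$ and $A$ is fixed, so $b=\calO(1)$.

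The first step is to discard the component of $v$ along $w_0$. By construction $\Re(v,w_0)=0$, so $v=i a\,w_0+v'$ with $a\in\Rm$ and $(w_0,v')=0$. The direction $i a w_0$ is an infinitesimal global phase rotation. Since $K(we^{i\theta})=K(w)$, its contribution to $\calJ'(0)$ vanishes exactly. One can also check this directly: it produces $\Re\big(ia(\E\{|y_0|^4\}-|\E\{y_0^2\}|^2)\big)=0$. It remains to treat $b'=A^*v'$.

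The second step is to show that the first entry of $b'$ is small. I would relate $v'$ to $w_\perp$, which by \fref{errw1A} satisfies $(e_1,A^*w_\perp)=\calO(C^{(n)})$. We have $v'=w_\perp-(w_0,w_\perp)w_0$, up to a multiple of $w_0$ that the first step has already removed. The condition $([C^{(s)}]^{-1}w_0,w_\perp)=0$ combined with \fref{errC} gives $(w_0,w_\perp)=\calO(C^{(s)}-I)$. Hence $(e_1,b')=\calO(C^{(s)}-I)+\calO(C^{(n)})$. Inserting $b'=(e_1,b')e_1+\sum_{j\neq 1}(e_j,b')e_j$, the $e_1$ part multiplies $\E\{|s_1|^2|s_1|^2\}-|\E\{s_1^2\}|^2$ and similar bounded moments, so it contributes $\calO(C^{(s)}-I)+\calO(\max_j\E\{|n_j|^2\})$.

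The third step handles the entries $j\neq1$ in the noiseless part. These give
$$
|\gamma|^2\bar\gamma\,\overline{b'_j}\Big(\E\{|s_1|^2 s_1^* s_j\}-\E\{(s_1^*)^2\}\E\{s_1 s_j\}\Big),
$$
which is exactly the quantity in the statement, with bounded coefficients. The noise terms come from expanding $y_0$ and $\delta y$ in powers of $n$. Terms linear (or cubic) in $n$ vanish because $n$ is mean zero and independent of $s$. The remaining terms, quadratic and quartic in $n$, are bounded by Cauchy--Schwarz and Hölder against bounded moments of $s$, which yields $\calO(\max_j\E\{|n_j|^2\})+\calO(\max_j\E\{|n_j|^4\})$.

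The main obstacle is the second step. One must check carefully that the $C^{(s)}$-weighted decomposition $w=uw_0+w_\perp$ of Lemma~\ref{lemU} interacts with the phase-invariance argument so that only an $\calO(C^{(s)}-I)+\calO(C^{(n)})$ residue survives along $e_1$, rather than an $\calO(1)$ multiple of the (nonzero) kurtosis of $s_1$. I would first try working directly with $v'$ orthogonal to $w_0$, and using \fref{errC} to compare it with $w_\perp$.
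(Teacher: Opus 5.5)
Your proposal is correct and is essentially the paper's proof: discarding the phase-rotation component $iaw_0$ is the same observation the paper makes when only $\Re\eta_1$ survives in the first two terms of \fref{DK}, and your exact identity $v'=w_\perp-(w_0,w_\perp)w_0$ folds the $w_0$-component of $w_\perp$ (which the paper controls through $\Re\eta_1=-\Re(w_0,w_\perp)=\calO(C^{(s)}-I)$) into the coordinate $(e_1,A^*v')=(e_1,A^*w_\perp)-\gamma(w_0,w_\perp)$; after that, your $j\neq 1$ coefficients coincide with the paper's $(A^*w_\perp,e_j)$ and the noise expansion is the same. The step you flag as the main obstacle is therefore already settled by that identity, with one caveat: \fref{errw1A} actually holds only as $(e_1,A^*w_\perp)=\calO(C^{(s)}-I)+\calO(C^{(n)})$, because $[C^{(s)}]^{-1}=A^*A+\calO(C^{(n)})$ while $AA^*=I+\calO(C^{(s)}-I)+\calO(C^{(n)})$, and this weaker form is exactly the bound your argument uses.
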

\begin{proof}
We first introduce the notation $y_0:=w_0^*x=\gamma s_1+w_0^* n$, where $\gamma$ is defined in Section \ref{opti}. Direct calculations starting from \fref{dotzero} yield then
\bee
\left. \frac{1}{4}\frac{d K(w(t))}{dt}\right|_{t=0}&=& \Re\, \E\Big\{(\dot w(0)^* x)  |y_0|^2 y^*_0\Big\}- \Re \,  \E\Big\{(\dot w(0)^* x) y_0\Big\} \E\Big\{(y^*_0)^2\Big\}.
\eee
Injecting the decomposition $w=u w_0+w_\perp$ into $\dot w^*(0)x$, we find the expression
\bea \label{dw0}
\dot w^*(0)x
&=& \eta_1 y_0+\delta w\\ \nonumber
\eta_1&=&(u-\Re u-\Re (w_0,w_\perp))^*\\ \nonumber
\delta w&=&(w_\perp,As)+(w_\perp,n).
\eea
This gives
\bea \label{DK}
\left. \frac{1}{4}\frac{d K(w(t))}{dt}\right|_{t=0}&=& \Re\, \eta_1  \E \big\{|y_0|^4\big\}- \Re\, \eta_1 \E \big\{y_0^2\big\} \E\,\big\{(y^*_0)^2\big\} \nonumber \\
&&+\Re\, \E\big\{ \delta w |y_0|^2 y^*_0\big\}- \Re\, \E\big\{\delta w \, y_0\big\} \E \big\{(y^*_0)^2\big\}. 
\eea
The first two terms in the r.h.s. above are not small, unless $\Re \eta_1$ is. The definition of $\eta_1$ gives $\Re \eta_1=-\Re (w_0,w_\perp)$. 
Since $(w_0,(C^{(s)})^{-1} w_\perp)=0$ by construction, we can write  $w_\perp=C^{(s)} w_0^\perp$, for some $w_0^\perp$ such that $(w_0,w_0^\perp)=0$. Then,  $(w_0,w_\perp)=(w_0,C^{(s)} w^\perp_0)=(w_0,(C^{(s)}-I) w^\perp_0)$. We have hence thus just shown that $\Re \eta_1=\calO(C^{(s)}-I)$, which is small when the sources are weakly correlated. As a consequence,
\be \label{1st0}
\Re\, \eta_1  \E \big\{|y_0|^4\big\}- \Re\, \eta_1 \E \big\{y_0^2\big\} \E\,\big\{(y^*_0)^2\big\}=\calO(C^{(s)}-I).
\ee
We treat now the third and fourth terms in \fref{DK}. We use for this our particular choice for $w_\perp$, that we recall verifies by construction $(A^* w_\perp,e_1)=\calO(C^{(s)}-I)$, see \fref{errw1A}. We have then
\bea \label{relwp}
(w_\perp,As)=(A^* w_\perp,e_1) s_1 +\sum_{j \neq 1}(A^* w_\perp,e_j)s_j=s_1 \calO(C^{(s)}-I)+\sum_{j \neq 1}(A^* w_\perp,e_j) s_j.
\eea 
With the assumption that the noise $n$ and the sources $s$ are mean-zero and independent, we arrive at
\bee
\E\Big\{\delta w |y_0|^2 y^*_0\Big\}&=&\E\Big\{ (w_\perp,As)\Big[|\gamma|^2 \gamma^*|s_1|^2 s_1^*+2 (\Re \gamma^* s_1^* w_0^* n)(w_0^* n )^* +|w_0^* n|^2 \gamma^*s_1^*\Big]\Big\}\\
&&+\E\Big\{ (w_\perp,n) \Big[|\gamma|^2 |s_1|^2 (w_0^*n)^*+2 (\Re \gamma^* s_1^* w_0^*n)s_1^* +|w_0^* n|^2 (w_0^* n)^*\Big]\Big\}.
\eee
Injecting \fref{relwp} in the latter relation, we find after direct algrebra 
\bea \nonumber
\E\big\{ \delta w |y_0|^2 y^*_0\big\}&=&|\gamma|^2 \gamma^* \sum_{j \neq 1}(A^* w_\perp,e_j) \E\Big\{ s_j |s_1|^2 s_1^*\Big\}+\calO(C^{(s)}-I)\\
&&+\calO\big(\max_{j}\E\big\{|n_j|^4\big\}\big)+\calO\big(\max_{j}\E\big\{|n_j|^2\big\}\big). \label{1sto}
\eea
In the same way, we find for the fourth term in \fref{DK},
\bee
\E\Big\{\delta w y_0\Big\}&=&\E\Big\{ (w_\perp,As) \gamma s_1+(w_\perp,n) w_0^*n\Big\}\\
&=&\gamma\sum_{j \neq 1}(A^* w_\perp,e_j) \E\big\{ s_j s_1\big\}+\calO(C^{(s)}-I)+\calO(\max_{j}\E\big\{|n_j|^2\big\}).
\eee
Hence,
\bee
  \E\big\{ \delta w |y_0|^2 y^*_0\big\}- \E\big\{\delta w \, y_0\big\} \E \big\{(y^*_0)^2\big\}
  &=&|\gamma|^2 \gamma^* \sum_{j \neq 1}(A^* w_\perp,e_j) \E\big\{ s_j s_1\big\} \E\big\{(s_1^*)^2\big\}\\
  &&+\calO(C^{(s)}-I)+\calO\big(\max_{j}\E\big\{|n_j|^4\big\}\big)\\
  &&+\calO\big(\max_{j}\E\big\{|n_j|^2\big\}\big),
\eee
which concludes the proof together with \fref{1st0} and \fref{1sto}.
\end{proof}

\subsection{Second order optimality condition}
\begin{lemma} Using notations \fref{not1} and \fref{not2}, we have \label{opt2}
  \bee
  \left. \frac{1}{4}\frac{d^2 K(w(t))}{dt^2}\right|_{t=0}&=& -\|w_\perp\|^2 \left(\E \Big\{(|s_1|^4\Big\}-\E\Big\{s_1^2\Big\} \E\Big\{(s_1^*)^2\Big\}-2\right)\\
  &&+\calO(M^{(s)})+\calO(M^{(n)}).  \eee
\end{lemma}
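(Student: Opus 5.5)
We compute the second derivative of $\calJ(t)=K(w(t))$ at $t=0$ directly, reusing the notation of Lemma \ref{opt1}: $y_0=w_0^*x=\gamma s_1+w_0^*n$, and $y(t)=w(t)^*x$ with $\dot y(0)=\dot w(0)^*x$ and $\ddot y(0)=\ddot w(0)^*x$. Differentiating $K=\E\{|y|^4\}-2-|\E\{y^2\}|^2$ twice and dividing by $4$ gives the sum of three groups:
\begin{align*}
&\Re\,\E\{\ddot y\, |y_0|^2 y_0^*\}-\Re\,\E\{\ddot y\, y_0\}\E\{(y_0^*)^2\},\\
&\E\{|\dot y|^2|y_0|^2\}+\tfrac12\Re\,\E\{\dot y^2 (y_0^*)^2\}+\Re\,\E\{\dot y\, y_0^* \dot y^* y_0\}\ \text{(up to normalization of these coefficients)},\\
&-\tfrac12|\E\{\dot y\, y_0\}|^2-\tfrac12\Re\,\E\{\dot y^2\}\E\{(y_0^*)^2\}.
\end{align*}
Each group is then expanded using the decomposition $w=uw_0+w_\perp$ and \fref{dw0}, i.e. $\dot y=\eta_1 y_0+\delta w$ with $\delta w=(w_\perp,As)+(w_\perp,n)$. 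For $\ddot y(0)$ we use \fref{ddotzero}, which gives $\ddot w(0)^*x=\alpha y_0+\beta\,\delta w$ with explicit scalars $\alpha,\beta$ depending on $u$ and $(w_0,w_\perp)$. The key contribution is $\alpha=-\|w\|^2+(\text{terms in }\Re u,\Re(w_0,w_\perp))$, which reduces to $-\|w_\perp\|^2$ up to small terms via Lemma \ref{lemU} and $\Re(w_0,w_\perp)=\calO(C^{(s)}-I)$.

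In the second step we isolate the leading terms. The $y_0$-part of $\ddot y$ produces $\alpha(\E\{|y_0|^4\}-|\E\{y_0^2\}|^2)$. Replacing $y_0$ by $\gamma s_1$ costs $\calO(M^{(n)})$ by independence of $s$ and $n$ and the mean-zero property. The terms quadratic in $\delta w$ are expanded with \fref{relwp}: $(w_\perp,As)=\sum_{j\neq1}c_j s_j+s_1\calO(C^{(s)}-I)$, where $c_j=(A^*w_\perp,e_j)$. Every resulting moment is of the form $\E\{s_js_i^*|s_1|^2\}$, $\E\{s_js_i(s_1^*)^2\}$ or $\E\{|s_j|^2|s_1|^2\}$. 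We subtract the corresponding factorized products and control the differences by $M^{(s)}$. The factorized products are then reduced using $\E\{s_js_i^*\}=\delta_{ij}+\calO(C^{(s)}-I)$ and $\E\{|s_1|^2\}=1$. After this reduction the $\delta w$-quadratic terms become multiples of $\sum_{j\ne1}|c_j|^2=\|A^*w_\perp\|^2+\calO(\cdot)$, together with terms involving $\E\{s_j s_i\}\E\{(s_1^*)^2\}$. These pieces cancel against the last group, exactly as in the Gaussian identity where the kurtosis of a combination of independent sources is additive. The remaining cross terms of order one in $\delta w$ and one in $y_0$ are handled as in Lemma \ref{opt1}, and are either $\calO(M^{(s)})$ or multiplied by $\Re\eta_1=\calO(C^{(s)}-I)$.

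Finally we normalize. We must check that $|\gamma|^4\|w_\perp\|^2$ and the factor $\|A^*w_\perp\|^2$ combine into the stated coefficient $-\|w_\perp\|^2$. This uses $[C^{(s)}]^{-1}=A^*A+\calO(C^{(n)})$ and \fref{errC}, which give $AA^*=I+\calO(M^{(s)}+M^{(n)})$, so that $A$ is nearly unitary, $|\gamma|=1+\calO(\cdot)$ and $\|A^*w_\perp\|=\|w_\perp\|(1+\calO(\cdot))$. The main obstacle I expect is this bookkeeping of the $\delta w$-quadratic terms. One has to show that the terms of the form $\E\{|\delta w|^2|y_0|^2\}$, $\E\{\delta w^2 (y_0^*)^2\}$ and their factorized counterparts cancel exactly in the ``independent'' limit rather than leaving an $\calO(1)$ residue. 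I would first verify this cancellation for independent sources with true expectations, which is the classical computation of \cite{fastICA}, and only then track every deviation into $M^{(s)}$.
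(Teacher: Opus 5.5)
There is a genuine gap, and it sits at the term $-2|\E\{\dot y\,y_0\}|^2$. Your route is essentially the paper's: expand $\tfrac14\calJ''(0)$ into six terms, insert $\dot y=\eta_1y_0+\delta w$ and $\ddot y=\alpha y_0+\beta\,\delta w$, and compare moments with factorized products. First, the coefficients should be
\begin{align*}
\tfrac14\calJ''(0)={}&\Re\E\{\ddot y\,|y_0|^2y_0^*\}-\Re\E\{\ddot y\,y_0\}\E\{(y_0^*)^2\}+2\E\{|\dot y|^2|y_0|^2\}\\
&-2|\E\{\dot y\,y_0\}|^2+\Re\E\{\dot y^2(y_0^*)^2\}-\Re\E\{\dot y^2\}\E\{(y_0^*)^2\}.
\end{align*}
With your $c_j=(A^*w_\perp,e_j)$, the leading part of the failing term is $-2|\gamma|^2\big|\sum_{j\neq1}c_j\E\{s_js_1\}\big|^2$. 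It is not a cross term carrying $\Re\eta_1$, and it is not $\calO(M^{(s)})$: $M^{(s)}$ contains the covariance $C^{(s)}-I$ but not the pseudo-covariances $\E\{s_js_1\}$. These enter $M^{(s)}$ only through the entry $\E\{s_js_i^*|s_1|^2\}-\E\{s_js_1\}\E\{s_i^*s_1^*\}$ with $i\neq j$. That entry is built to pair the off-diagonal part of $2\E\{|\dot y|^2|y_0|^2\}$ with this very term, which is the paper's route. Your pairing with $\E\{s_js_i^*\}\E\{|s_1|^2\}$ is not controlled by $M^{(s)}$. Even with the right pairing, the diagonal remainder $-2|\gamma|^2\sum_{j\neq1}|c_j|^2|\E\{s_js_1\}|^2$ survives. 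It is of size $\|w_\perp\|^2\max_{j\neq1}|\E\{s_js_1\}|^2$, and nothing in $M^{(s)}+M^{(n)}$ bounds $\E\{s_js_1\}$. Your appeal to additivity of the kurtosis for independent sources hides exactly this point, since that identity uses $\E\{s_js_1\}=0$.

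No bookkeeping can close this, because the statement is false as written. Take $N=2$, $A=I$, $n=0$, $s_1=e^{i\theta}$, $s_2=e^{-i\theta}$ with $\theta$ uniform. Then:
\begin{itemize}
\item $C^{(s)}=I$, so the data is whitened, and $w_0=e_1$, $\gamma=1$.
\item $M^{(s)}=M^{(n)}=0$: for instance $\E\{s_2^2(s_1^*)^2\}=\E\{e^{-4i\theta}\}=0$, and the $i\neq j$ maximum is empty.
\item $\E\{|s_1|^4\}-|\E\{s_1^2\}|^2-2=-1\neq0$.
\end{itemize}
On the unit sphere $K(w)=-1-2|w_1|^2|w_2|^2$, so $\tfrac14\calJ''(0)=-\|w_\perp\|^2$, whereas the lemma predicts $+\|w_\perp\|^2$. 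The discrepancy is exactly $-2|\E\{\dot y\,y_0\}|^2$ with $\E\{s_2s_1\}=1$. Theorem \ref{th:err} fails too: the minimizer has $|w_1|=|w_2|$, so $\textrm{dist}(w_e,w_0)=\pi/4$.

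The paper's proof has the same hole. Its formula for $|\E\{(\dot w(0)^*x)y_0\}|^2$ restricts the double sum to $j\neq i$, silently dropping these diagonal terms. Adding $\max_{j\neq1}|\E\{s_js_1\}|^2$ to $M^{(s)}$ removes the obstruction.

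Separately, your stated main obstacle is mis-aimed: the $\delta w$-quadratic terms do not cancel in the independent limit. The term $2\E\{|\delta w|^2|y_0|^2\}\to2\|w_\perp\|^2$ is precisely what produces the $-2$ in the kurtosis. Only differences with the correct factorized partners vanish.
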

\begin{proof}
  As in the proof of Lemma \ref{lemU}, we use the notation $y_0:=w_0^*x=\gamma s_1+w_0^* n$, and obtain after direct calculations
  \bea \nonumber
\left.\frac{1}{4}\frac{d^2 K(w(t))}{dt^2}\right|_{t=0}&=&\Re \E \Big\{(\ddot w(0)^* x) y_0^* |y_0|^2\Big\}-\Re \E\Big\{(\ddot w(0)^* x) y_0\Big\} \E\Big\{(y_0^*)^2\Big\}\\\nonumber
&&+2 \E\Big\{|\dot w(0)^* x|^2 |y_0|^2\Big\}-2\E\Big\{(\dot w(0)^* x) y_0\Big\} \E\Big\{(\dot w(0)^T \bar{x}) y_0^*)\Big\}\\ \label{ddw0}
&&+ \Re \E\Big\{(\dot w(0)^* x)^2 (y_0^*)^2\Big\}-\Re \E\Big\{(\dot w(0)^* x)^2\Big\} \E\{(y_0^*)^2\Big\}.
\eea
We need first to explicitate $\ddot w(0)^*x$. Injecting the decomposition $w=uw_0+w_\perp$ in \fref{ddotzero} yields the expression
\begin{align*}
  \ddot w(0)&=\xi^*_1 w_0 - 2 w_\perp \Re(w_0,w_\perp),
\end{align*}
where $\xi_1$ is defined by
              \begin{align*}
                \xi_1=\Big(3 (\Re u)^2+3(\Re (w_0,w_\perp))^2+6 \Re u \Re (w_0,w_\perp) -2 u \Re u - 2 u \Re (w_0,w_\perp)\\
                -|u|^2-\|w_\perp\|^2-2\Re (u w_0,w_\perp)\Big)^*.
              \end{align*}
             Using that $w_0^*x=\gamma s_1+w_0^* n$ and $x=As+n$, we find
              \bee
              \ddot w(0)^*x &=& \gamma \xi_1 s_1 -2 (A^* w_\perp,s)\Re(w_0,w_\perp) +  G_0(n;w_\perp).\\
              G_0(n;w_\perp)&=&\xi_1 (w_0,n) -2 (w_\perp,n) \Re(w_0,w_\perp) .
              \eee
              With \fref{relwp}, we rewrite $\ddot w(0)^*x$ as
\be \label{ddw1}
\ddot w^*(0)x=\gamma \xi_1 s_1+\sum_{j \neq 1}^N \xi_j s_j+G_0(n;w_\perp)+s_1 \calO(C^{(s)}-I),
\ee
where, for $j \neq 1$,
\bee
\xi_j &=&- 2 (A^* w_\perp,e_j) \Re(w_0,w_\perp).
\eee

The next step is to derive a similar expression for $\dot w^*(0)x$. For this, we start from \fref{dw0} and realize that $\eta_1=\calO(C^{(s)}-I)$. We have indeed already seen in the proof of Lemma \ref{opt1} that $\Re \eta_1=\calO(C^{(s)}-I)$, and moreover $\Im \eta_1=\Im u=u_1=\calO(C^{(s)}-I)$ according to Lemma \ref{lemU}. Hence,
\be \label{dw1}
\dot w^*(0)x=\sum_{j\neq 1}^n \eta_j s_j+G_1(n;w_\perp)+s_1 \calO(C^{(s)}-I)
\ee
where for $j \neq 1$,
\bee
\eta_j &=&(A^* w_\perp,e_j)\\
G_1(n;w_\perp)&=&\eta_1 (w_0,n) +(w_\perp,n).
\eee

We are now in position to treat the first two terms in the r.h.s. of \fref{ddw0}. Plugging \fref{ddw1} in these two terms, and using that $(w_0,w_\perp)=\calO(C^{(s)}-I)$ as obtained in the proof of Lemma \ref{opt1}, we find
\begin{align*}
\Re \E \Big\{(\ddot w(0)^* x) y_0^* |y_0|^2\Big\}&-\Re \E\Big\{(\ddot w(0)^* x) y_0\Big\} \E\Big\{(y_0^*)^2\Big\}\\
=&|\gamma|^4 \Re \xi_1 \left(\E \Big\{(|s_1|^4\Big\}-\E\Big\{s_1^2\Big\} \E\Big\{(s_1^*)^2\Big\}\right)+\calO(C^{(s)}-I)\\
  &+\calO\big(\max_{j}\E\big\{|n_j|^2\big\}\big)+\calO\big(\max_{j}\E\big\{|n_j|^4\big\}\big).
\end{align*}
Since $\Re \xi_1=- (\Im u)^2-\|w_\perp\|^2+\calO(C^{(s)}-I)$, and as we have already seen $\Im u=u_1=\calO(C^{(s)}-I)$, we find
\begin{align} \nonumber
\Re \E \Big\{(\ddot w(0)^* x) y_0^* |y_0|^2\Big\}&-\Re \E\Big\{(\ddot w(0)^* x) y_0\Big\} \E\Big\{(y_0^*)^2\Big\}\\\nonumber
=&-|\gamma|^4 \|w_\perp\|^2 \left(\E \Big\{(|s_1|^4\Big\}-\E\Big\{s_1^2\Big\} \E\Big\{(s_1^*)^2\Big\}\right)+\calO(C^{(s)}-I)\\
  &+\calO\big(\max_{j}\E\big\{|n_j|^2\big\}\big)+\calO\big(\max_{j}\E\big\{|n_j|^4\big\}\big). \label{12}
\end{align}

We next move on to the third term in \fref{ddw0}, that verifies, using \fref{dw1},
\bee
\E\Big\{|\dot w(0)^* x|^2 |y_0|^2\Big\} &=&|\gamma|^2\E\Big\{\big|\sum_{j \neq 1} \eta_j s_j\big|^2 |s_1|^2\Big\}+\calO(C^{(s)}-I)\\
&&+\calO\big(\max_{j}\E\big\{|n_j|^2\big\}\big)+\calO\big(\max_{j}\E\big\{|n_j|^4\big\}\big).
\eee
The first term in r.h.s. above is decomposed into
\bee
\E\Big\{\big|\sum_{j \neq 1} \eta_j s_j\big|^2 |s_1|^2\Big\}&=&\E\Big\{\sum_{j \neq 1} |\eta_j s_j|^2 |s_1|^2\Big\}+\E\Big\{\sum_{i \neq 1} \sum_{j \neq i, j \neq 1} \eta_j s_j \eta_i^* s_i^* |s_1|^2\Big\}\\
\E\Big\{\sum_{j \neq 1} |\eta_j s_j|^2 |s_1|^2\Big\}&=&\sum_{j \neq 1 } |\eta_j|^2 \E\{|s_j|^2\}  \E\{|s_1|^2\}\\
&&+\sum_{j \neq 1} |\eta_j|^2 \left(\E\{|s_j|^2 |s_1|^2\}-\E\{|s_j|^2\}  \E\{|s_1|^2\} \right).
\eee
The second term in the r.h.s. above is small when $s_j$ and $s_1$ are weakly correlated. Note that by construction $\E\{|s_j|^2\} = \E\{|s_1|^2\}=1$. For the first term, we need the following relation, obtained using \fref{errw1A},\fref{errC}, and the fact that $\{e_j\}$ forms a basis of $\Rm^N$:
\bee
\sum_{j \neq 1} |\eta_j|^2 &=& \sum_{j \neq 1} |( A^* w_\perp,e_j)|^2\\
&=&(A A^* w_\perp,w_\perp)+\calO(C^{(n)})=((C^{(s)})^{-1}w_\perp,w_\perp)+\calO(C^{(n)})\\
&=&(\det C^{(s)})^{-1} (w_\perp,w_\perp)+\calO(C^{(s)}-I)+\calO(C^{(n)}).
\eee
A short calculation shows that $\det C^{(s)}=1+\calO(C^{(s)}-I)$. Moreover, we deduce from the relation $A C^{(s)} A^*+C^{(n)}=I$ that $C^{(s)}=A^{-1} (A^{-1})^*+\calO(C^{(n)})$. This implies, since $c_1$ is the first column of $(A^{-1})^*$ and $C^{(s)}$ has a unit diagonal, that $\|c_1\|=1+\calO(C^{(n)})$. As a consequence, $|\gamma|=\|c_1\|^{-1}=1+\calO(C^{(n)})$.

Observing finally that
$$
\left|\E\Big\{(\dot w(0)^* x) y_0\Big\}\right|^2=\sum_{i \neq 1}\sum_{j \neq i,  j\neq 1} \eta_j \E\Big\{ s_j s_1 \Big\}\E\Big\{s_i^* s_1^*\Big\}+\calO(C^{(n)}),
$$
we obtain for the third and fourth terms in \fref{ddw0},
\begin{align} \nonumber
  \E\Big\{|\dot w(0)^* x|^2 |y_0|^2\Big\}&-\E\Big\{|\dot w(0)^* x|^2\Big\} \E\Big\{|y_0|^2\Big\}\\ \nonumber
                                         =&-\|w_\perp\|^2+\calO\big(\max_{j}\E\big\{|n_j|^2\big\}\big)+\calO(C^{(s)}-I)+\calO\big(\max_{j}\E\big\{|n_j|^4\big\}\big)
  \\ \nonumber
                                         &+\calO\big( \max_{i \neq j, i\neq 1, j\neq 1}|\E\{s_js_i^* |s_1|^2\}-\E\{s_js_1\} \E\{s_i^* s_1^*\}\big|\big)\\
  &+\calO\big( \max_{j \neq 1} |\E\{|s_j|^2 |s_1|^2\}-\E\{|s_j|^2\} \E\{|s_1|^2\}|\big). \label{34}
\end{align}

We move on now to the fifth and sixth terms in \fref{ddw0} and find
\bee
\E\Big\{(\dot w(0)^* x)^2 (y_0^*)^2\Big\} &=&\gamma^2\E\Big\{\big(\sum_{j \neq 1} \eta_j s_j\big)^2 (s_1^*)^2\Big\}+\calO\big(\max_{j}\E\big\{|n_j|^2\big\}\big)+\calO(C^{(s)}-I)\\
&&+\calO\big(\max_{j}\E\big\{|n_j|^4\big\}\big),
\eee
as well as
\bee
\E\Big\{(\dot w(0)^* x)^2\Big\} \E\Big\{(y_0^*)^2\Big\} &=&\gamma^2\E\Big\{\big(\sum_{j \neq 1} \eta_j s_j\big)^2\Big\} \E\Big\{(s_1^*)^2\Big\}+\calO\big(\max_{j}\E\big\{|n_j|^2\big\}\big)+\calO(C^{(s)}-I)\\
&&+\calO\big(\max_{j}\E\big\{|n_j|^4\big\}\big).
\eee
Writing
\bee
\E\Big\{\big(\sum_{j \neq 1} \eta_j s_j\big)^2 (s^*_1)^2\Big\}&=&\E\Big\{\sum_{j \neq 1} (\eta_j s_j)^2 (s_1^*)^2\Big\}+\E\Big\{\sum_{i \neq 1}\sum_{j \neq i,  j\neq 1}\eta_j s_j \eta_i s_i (s_1^*)^2\Big\}\\
\E\Big\{\big(\sum_{j \neq 1} \eta_j s_j\big)^2\Big\} \E\Big\{(s_1^*)^2\Big\}&=&\E\Big\{\sum_{j \neq 1} (\eta_j s_j)^2\Big\} \E\Big\{(s_1^*)^2\Big\}+\E\Big\{\sum_{i \neq 1}\sum_{j \neq i,  j\neq 1}\eta_j s_j \eta_i s_i \Big\} \E\Big\{(s_1^*)^2\Big\}
\eee
we obtain finally
\begin{align} \nonumber
  \E\Big\{(\dot w(0)^* x)^2 (y_0^*)^2\Big\}&-\E\Big\{(\dot w(0)^* x)^2\Big\} \E\Big\{(y_0^*)^2\Big\}\\ \nonumber
  =&\; \calO\big( \max_{i \neq j, i\neq 1, j\neq 1}|\E\{s_js_i (s_1^*)^2\}-\E\{s_js_i\} \E\{(s_1^*)^2|\}\big)\\ \nonumber
  &+\calO\big( \max_{j \neq 1}|\E\{s_j^2 (s_1^*)^2\}-\E\{s_j^2\} \E\{(s_1^*)^2\}|\big)\\
  &+\calO\big(\max_{j}\E\big\{|n_j|^2\big\}\big)+\calO(C^{(s)}-I)+\calO\big(\max_{j}\E\big\{|n_j|^4\big\}\big)  \label{56}
\end{align}
Gathering \fref{12}-\fref{34}-\fref{56}, together with $\gamma=\|c_1\|^{-1}=1+\calO(C^{(n)})$ then concludes the proof.
\end{proof}

\subsection{Conclusion} We set without lack of generality $\calK(s_1):=\E \{|s_1|^4\}-2-|\E\{s_1^2\}|^2<0$. Combining expansion \fref{TayJ}, Lemmas \ref{opt1} and \ref{opt2}, and estimate \fref{estimJ}, yields
$$
K(w_e)=K(w_0)-2\calK(s_1) \|w_\perp\|^2+\calO(\|w_\perp\|^3)+\calO(M^{(s)})+\calO(M^{(n)}).
$$
Since $w_e$ is a global minimizer of $K$ on the unit sphere with constraint $\Re (w_e,w_0) \geq 0$, we have $K(w_0) \geq K(w_e)$, and therefore
\be \label{ineq}
-2\calK(s_1) \|w_\perp\|^2+\calO(\|w_\perp\|^3)=\calO(M^{(s)})+\calO(M^{(n)}).
\ee
Consider now the polynomial inequality on $x \in \Rm^+$:
$$
x^2(1- ax) \leq b, \qquad a,b \in \Rm^+.
$$
It is not difficult to see that when $b \leq 12/27 a^2$, then $x$ verifies $x \leq 2/3a$, and as a consequence $1-ax \geq 1/3$. Hence, we have the estimate $x \leq \sqrt{3b}$ when  $b \leq 12/27 a^2$.

We apply this to equation \fref{ineq}: there exists then $\eps>0$, such that
$$
M^{(s)}+M^{(n)}\leq \eps
$$
implies that
\be \label{estwp}
w_\perp=\calO\big([M^{(s)}+M^{(n)}]^{1/2}\big).
\ee
We use the latter to control the distance between $w_e$ and $w_0$ on the complex unit sphere that we recall is defined by
$\textrm{dist}(w_e,w_0)=\arccos( \Re (w_e,w_0))$. Simple algebra involving the fact that $w_e-w_0= u w_0+w_\perp$ gives
\bee
\textrm{dist}(w_e,w_0)&=&\arccos( \Re (w_e-w_0,w_0)+1)=\arccos( \Re (u w_0+w_\perp,w_0)+1)\\
&=&\arccos( \Re (w_\perp,w_0)+\Re u+1),
\eee
where by construction $\Re(w_\perp,w_0)+\Re u \in [-2,0]$. The inequality
$$
\arccos (1-x)=\int_0^x \frac{dt}{\sqrt{t(2-t)}} \leq \int_0^x \frac{dt}{\sqrt{t}} = 2 \sqrt{x}
$$
then implies that
$$\textrm{dist}(w_e,w_0) \leq 2 \sqrt{-\Re(w_\perp,w_0)-\Re u}.
$$
We know from Lemma \ref{lemU} that $u=\calO(\|w_\perp\|)+\calO(C^{(s)}-I)$, and from the proof of Lemma \ref{opt2} that $(w_\perp,w_0)=\calO(C^{(s)}-I)$. Together with \fref{estwp}, we finally obtain that
$$
\textrm{dist}(w_e,w_0)=\calO\big([M^{(s)}+M^{(n)}]^{1/4}\big),
$$
which concludes the proof of the theorem.

\end{appendix}


 \bibliographystyle{siam}
  \bibliography{bibliography} 

\begin{thebibliography}{10}

\bibitem{AubryDO}
{\sc A.~Badon, V.~Barolle, K.~Irsch, A.~C. Boccara, M.~Fink, and A.~Aubry},
  {\em Distortion matrix concept for deep optical imaging in scattering media},
  Science Advances, 6 (2020).

\bibitem{BPV25}
{\sc R.~Bartels, O.~Pinaud, and M.~Varughese}, {\em Speckle imaging with blind
  source separation and total variation deconvolution}, Inverse Problems, 41
  (2025), p.~065003.

\bibitem{fastICA}
{\sc E.~Bingham and A.~Hyv\"{a}rinen}, {\em A fast fixed-point algorithm for
  independent component analysis of complex valued signals}, International
  Journal of Neural Systems, 10 (2000), pp.~1--8.

\bibitem{douglasICA}
{\sc S.~Douglas}, {\em On the convergence behavior of the {F}ast{ICA}
  algorithm}, Fourth international conference on Independent Component analysis
  and Blind Signal Separation (ICA2003).

\bibitem{farah2024synthetic}
{\sc Y.~Farah, G.~Murray, J.~Field, M.~Varughese, L.~Wang, O.~Pinaud, and
  R.~Bartels}, {\em Synthetic spatial aperture holographic third harmonic
  generation microscopy}, Optica, 11 (2024), pp.~693--705.

\bibitem{CLASS}
{\sc S.~Kang, P.~Kang, S.~Jeong, Y.~Kwon, T.~Yang, J.~Hong, M.~Kim, K.~Song,
  J.~Park, J.~Lee, M.~Kim, K.~H. Kim, and W.~Choi}, {\em High-resolution
  adaptive optical imaging within thick scattering media using closed-loop
  accumulation of single scattering}, Nature Communications, 8 (2017).

\bibitem{mertz2019introduction}
{\sc J.~Mertz}, {\em Introduction to Optical Microscopy}, Cambridge University
  Press, 2019.

\bibitem{murray2023aberration}
{\sc G.~Murray, J.~Field, M.~Xiu, Y.~Farah, L.~Wang, O.~Pinaud, and
  R.~Bartels}, {\em Aberration free synthetic aperture second harmonic
  generation holography}, Optics Express, 31 (2023), pp.~32434--32457.

\bibitem{GiganTR}
{\sc S.~M. Popoff, A.~Aubry, G.~Lerosey, M.~Fink, A.~C. Boccara, and S.~Gigan},
  {\em Exploiting the time-reversal operator for adaptive optics, selective
  focusing, and scattering pattern analysis}, Phys. Rev. Lett., 107 (2011),
  p.~263901.

\bibitem{DORT2}
{\sc C.~Prada, S.~Manneville, D.~Spoliansky, and M.~Fink}, {\em {Decomposition
  of the time reversal operator: Detection and selective focusing on two
  scatterers}}, The Journal of the Acoustical Society of America, 99 (1996),
  pp.~2067--2076.

\bibitem{zarzoso2009robust}
{\sc V.~Zarzoso and P.~Comon}, {\em Robust independent component analysis by
  iterative maximization of the kurtosis contrast with algebraic optimal step
  size}, IEEE Transactions on neural networks, 21 (2009), pp.~248--261.

\bibitem{GiganNMF}
{\sc L.~Zhu, F.~Soldevila, C.~Moretti, A.~d’Arco, A.~Boniface, X.~Shao,
  H.~Barbosa~de Aguiar, and S.~Gigan}, {\em Large field-of-view non-invasive
  imaging through scattering layers using fluctuating random illumination},
  Nature Communications, 13 (2022).

\end{thebibliography}
 \end{document}